\documentclass{amsart}

\usepackage{amsthm,amsmath,amssymb,amsfonts,epsfig}
\usepackage{hyperref}
\usepackage{stmaryrd}
\usepackage{epstopdf}
 \newtheorem{thm}{Theorem}[section]
 \newtheorem{cor}[thm]{Corollary}
 
 \newtheorem{prop}[thm]{Proposition}
 \theoremstyle{definition}
 \newtheorem{defn}[thm]{Definition}
 \theoremstyle{remark}
 \newtheorem{rem}[thm]{Remark}
 
 \numberwithin{equation}{section}





\newcommand{\Dom}{{\rm Dom}}

\newcommand{\Ker}{{\rm Ker}}

\renewcommand{\Re}{{\rm Re}\;}
\renewcommand{\Im}{{\rm Im}\;}

\newcommand{\dist}{{\rm dist}}

\newcommand{\ess}{\mathrm{ess}}

\newcommand{\codim}{\mathrm{codim}}


\begin{document}
\title[Spectral estimates and basis properties]
{Spectral estimates and basis properties for self-adjoint block operator matrices}
\author[Michael Strauss]{Michael Strauss}
\begin{abstract}
In the first part of this manuscript a relationship between the spectrum of self-adjoint operator matrices and the spectra of their diagonal entries is found. This leads to enclosures for spectral points and in particular, enclosures for eigenvalues. We also consider graph invariant subspaces, and their corresponding angular operators. The existence of a bounded angular operator leads to basis properties of the first component of eigenvectors of operator matrices for which the corresponding eigenvalues lie in a half line. The results are applied to an example from magnetohydrodynamics.
\end{abstract}
\maketitle
\section{Introduction}
Throughout, $A$ is a self-adjoint operator acting on a Hilbert space $\mathcal{H}_1$ and is bounded from below, $C$ is a self-adjoint operator acting on a Hilbert space $\mathcal{H}_2$ and is bounded from above, and $B$ is a densely defined closed operator from $\mathcal{H}_2$ to $\mathcal{H}_1$. We shall be concerned with the spectral properties of block operator matrices of the form
\begin{equation}\label{l0}
\mathcal{M}_0 := \left(
\begin{array}{cc}
A & B\\
B^* & C
\end{array} \right):\mathcal{H}_1\times\mathcal{H}_2\to\mathcal{H}_1\times\mathcal{H}_2
\end{equation}
with domain $\Dom(\mathcal{M}_0) = (\Dom(A)\cap\Dom(B^*))\times(\Dom(B)\cap\Dom(C))$. Operator matrices of this form appear in many applications, and the results obtained in this manuscript are applied to an example from magnetohydrodynamics. For a thorough account of this subject, the reader is referred to the monograph \cite{tret2}. We will always assume the following condition is satisfied
\begin{equation}\label{condition}
\Dom(\vert A\vert^{\frac{1}{2}})\subset\Dom(B^*).
\end{equation}
We denote by $\mathfrak{t}$ the closure of the quadratic form of $A$ which has domain $\Dom(\vert A\vert^{\frac{1}{2}})$. It follows from \eqref{condition} and the assumption that $A$ is bounded from below, that there exist constants $a,b\ge 0$ such that
\begin{equation}\label{domcons}
\Vert B^*x\Vert^2 \le a\mathfrak{t}[x] + b\Vert x\Vert^2\quad\textrm{for all}\quad x\in \Dom(\vert A\vert^{\frac{1}{2}}).
\end{equation}
We consider two classes of block operator matrices. If in addition to the condition \eqref{condition}, we have $\Dom(B)\subset\Dom(C)$, and $\Dom(B)$ is a core for $C$, then the operator $\mathcal{M}_0$ is called top-dominant (or upper-dominant). A top-dominant operator matrix is essentially self-adjoint, and the closure which we denote by $\mathcal{M}$ is given by
\begin{align}
\Dom(\mathcal{M})&=\bigg\{\left(
\begin{array}{c}
x\\
y
\end{array} \right):y\in\Dom(C),~x + \overline{(A-\upsilon I)^{-1}B}y\in\Dom(A)\bigg\}\label{top1}\\
\mathcal{M}\left(
\begin{array}{c}
x\\
y
\end{array} \right) &= \left(
\begin{array}{c}
A(x + \overline{(A-\upsilon I)^{-1}B}y) - \upsilon\overline{(A-\upsilon I)^{-1}B}y\\
B^*x + Cy
\end{array} \right)\label{top2}
\end{align}
where $\upsilon<\min\sigma(A)$ and is arbitrary; see \cite[Section 4.2]{math0} and references therein. If in addition to the condition \eqref{condition}, $\Dom(\vert C\vert^{\frac{1}{2}})\subset\Dom(B)$, then the operator $\mathcal{M}_0$ is called diagonally-dominant. A diagonally-dominant operator matrix is self-adjoint, and we therefore write $\mathcal{M}$ instead of $\mathcal{M}_0$; see \cite[Section 2]{math}.

Eigenvalues of operator matrices can often be characterised by variational principles; see for example \cite[Theorem 2.1]{math0} and \cite[Theorem 3.1]{math}. Enclosures for eigenvalues can also be obtained in terms of the eigenvalues of $A$, the upper bound on the spectrum of $C$, and the constants $a$ and $b$ which satisfy \eqref{domcons}; see \cite[Theorem 4.5]{math}. In Section 2 we prove a relationship between $\sigma(\mathcal{M})$ (the spectrum of $\mathcal{M}$) and $\sigma(A)$, $\sigma(C)$, and the constants $a$ and $b$. This leads to new enclosures for spectral points. Our approach is more general than the available variational principles; for example, we are not restricted to approximating only the discrete spectrum.

In Section 3 we consider spectral subspaces $\mathcal{L}_{(\alpha,\infty)}(\mathcal{M})$ for $\alpha\in\mathbb{R}$, that is, the range of the spectral projection associated to $\mathcal{M}$ and the interval $(\alpha,\infty)$. We are concerned with the existence of a so-called angular operator $K:\mathcal{H}_1\to\mathcal{H}_2$ with
\begin{equation}\label{theangularoperator}
\mathcal{L}_{(\alpha,\infty)}(\mathcal{M}) = \bigg\{\left(
\begin{array}{c}
x\\
Kx
\end{array} \right): x\in \Dom(K)\bigg\}.
\end{equation}
If the representation \eqref{theangularoperator} exists, the subspace $\mathcal{L}_{(\alpha,\infty)}(\mathcal{M})$ is called a graph invariant subspace. Graph invariant subspaces and angular operators have been widely studied and the following list is by no means exhaustive; \cite{adam}, \cite{amm}, \cite{KMM}, \cite{lan}, \cite{sha}, \cite{shk1}, \cite{tret1}. The case where $A$, $B$ and $C$ are bounded is considered in \cite{lan}, where it is shown that if $\Delta$ is an interval in $\rho(C)$ (the resolvent set of $C$) then the subspace $\mathcal{L}_{\Delta}(\mathcal{M})$ is graph invariant. In \cite{adam} the case where $B$ is bounded and the diagonal entries are separated is considered. It is shown that for $\max\sigma(C)< \alpha < \min\sigma(A)$, the subspace $\mathcal{L}_{(\alpha,\infty)}(\mathcal{M})$ admits the representation \eqref{theangularoperator} with $K\in\mathcal{B}(\mathcal{H}_1,\mathcal{H}_2)$. The top-dominant case with $C\in\mathcal{B}(\mathcal{H}_2)$ has been considered in \cite{sha}. The authors show that if there exists an $\alpha<\min\sigma(A)$ such that $C - \alpha I - \overline{B^*(A - \alpha I)^{-1}B} \ll 0$, then $\mathcal{L}_{(\alpha,\infty)}(\mathcal{M})$ admits the representation \eqref{theangularoperator} with $K\in\mathcal{B}(\mathcal{H}_1,\mathcal{H}_2)$. With a new approach, we prove the existence of graph invariant subspaces and bounded angular operators, our hypothesis is unrelated to those above, and is readily verified for a wide class of problems which includes any top-dominant or diagonally-dominant matrix for which $A$ has compact resolvent and $C\in\mathcal{B}(\mathcal{H}_2)$.

If $A$ has compact resolvent, then $\sigma(\mathcal{M})\cap(\max\sigma(C),\infty)$ consists only of isolated eigenvalues of finite multiplicity. In Section 4 we consider basis properties of the first component of eigenvectors corresponding eigenvalues which lie in the interval $(\max\sigma(C),\infty)$. From the existence of a bounded angular operator, one can deduce that the first components of the corresponding eigenvectors form a Riesz basis for $\mathcal{H}_1$; see \cite[Theorem 3.5]{adam}, and \cite[Corollary 2.6]{sha}. Under our hypothesis we also obtain a Riesz basis, and even a Bari basis. However, our angular operator may not be everywhere defined, in this case we find the co-dimension of $\Dom(K)$ in terms of the so-called Schur complement. In the final section we apply our results to a top-dominant operator matrix which arises in magnetohydrodynamics.

\subsection{Operator matrices and Schur complements}

Associated with $\mathcal{M}$ is the first Schur complement, which is a family of operators $S(\cdot)$ acting in $\mathcal{H}_1$. When $B$ is bounded, the first Schur complement is given by
\begin{equation}\label{shur}
S(\lambda) = A - \lambda I - B(C - \lambda I)^{-1}B^*,\quad\textrm{for}\quad\lambda\in\rho(C).
\end{equation}
Evidently, $S(\lambda)$ is defined for all $\lambda\in\rho(C)$, with $\Dom(S(\lambda)) = \Dom(A)$, and $S(\lambda)$ is self-adjoint for all $\lambda\in\rho(C)\cap\mathbb{R}$. The spectrum of $S(\lambda)$ is given by $\sigma(S) = \{\lambda\in\mathbb{C}:0\in\sigma(S(\lambda))\}$. It follows from the Schur factorisation that $\sigma(S)\cap\rho(C) = \sigma(\mathcal{M})\cap\rho(C)$; see \cite[Section 4.1]{math}. When $B$ is unbounded, the operator \eqref{shur} may not be densely defined. However, the Schur complement can be defined for any $\lambda\in\mathbb{C}$ with $\Re \lambda > \max\sigma(C)$ using the following form
\begin{displaymath}
s_\lambda(x,y) = \langle(A - \upsilon I)^{\frac{1}{2}}x,(A - \upsilon I)^{\frac{1}{2}}y\rangle + (\upsilon - \lambda)\langle x,y\rangle - \langle(C-\lambda I)^{-1}B^*x,B^*y\rangle,
\end{displaymath}
where $\Dom(s_\lambda) = \Dom(\vert A\vert^{\frac{1}{2}})$ and $\upsilon<\min\sigma(A)$ is arbitrary. The Schur complement is then defined using the first representation theorem; for more details see \cite[Section 4.2]{math0} and references therein. The spectra of the Schur complement and $\mathcal{M}$ coincide on $\{z\in\mathbb{C}:\Re z>\max\sigma(C)\}$, however, if $\upsilon<\min\sigma(A)$, $\Dom(A)\subseteq\Dom(B^*)$ and for every $a>0$ there exists a $b\ge 0$ such that
\begin{displaymath}
\Vert B^*x\Vert \le a\langle(A-\upsilon I)x,x\rangle + b\Vert x\Vert^2\quad\textrm{for all}\quad x\in\Dom(A),
\end{displaymath}
then the Schur complement can be defined on $\rho(C)$, and $\sigma(S)\cap\rho(C) = \sigma(\mathcal{M})\cap\rho(C)$; see \cite[Section 4.2]{math0}. We will find it useful to note that for any $\alpha\in\rho(\mathcal{M})$ for which $S(\alpha)$ is defined, the restriction of $(\mathcal{M}-\alpha I)^{-1}$ to $\mathcal{H}_1\times\Dom(F(\alpha))$ with $F(\alpha) = B(C-\alpha I)^{-1}$, is given by
\begin{equation}\label{resolvent}
\left(
\begin{array}{cc}
S(\alpha)^{-1} & -S(\alpha)^{-1}F(\alpha)\\
-(C - \alpha I)^{-1}B^*S(\alpha)^{-1} & (C - \alpha I)^{-1} + (C - \alpha I)^{-1}B^*S(\alpha)^{-1}F(\alpha)
\end{array} \right).
\end{equation}

\section{Resolvent sets and spectral enclosures}\label{enclosures}
Let $U$ be a subset of $\rho(C)$ on which $\sigma(S)$ and $\sigma(\mathcal{M})$ coincide. If $B$ is bounded, and $\lambda\in\sigma(\mathcal{M})\cap U$, then for each $\varepsilon>0$ there exists a normalised $x\in\Dom(A)$ such that
\begin{displaymath}
\Vert Ax - \lambda x - B(C - \lambda I)^{-1}B^*x\Vert \le \varepsilon
\end{displaymath}
and therefore
\begin{align*}
\Vert Ax - \lambda x\Vert &\le \Vert B(C - \lambda I)^{-1}B^*x\Vert + \varepsilon\\
&\le\frac{\Vert B\Vert^2}{\dist[\lambda,\sigma(C)]} + \varepsilon.
\end{align*}
From the spectral theorem it follows that
\begin{displaymath}
\dist[\lambda,\sigma(A)] \le \frac{\Vert B\Vert^2}{\dist[\lambda,\sigma(C)]}.
\end{displaymath}
The theorem below generalises this statement to the case where $B$ is unbounded.
We note that when $B$ is bounded \eqref{domcons} holds with the constants $a = 0$ and $b = \Vert B\Vert^2$.

\begin{thm}\label{thm0}
Let $\mathcal{M}$ be a top-dominant or diagonally-dominant operator matrix and let $a,b\ge0$ satisfy \eqref{domcons}. If $\lambda\in\sigma(\mathcal{M})\cap U$ and $\dist[\lambda,\sigma(C)]>a$, then
\begin{equation}\label{new}
\dist[\lambda,\sigma(A)]\le\frac{\vert a\lambda + b\vert}{\dist[\lambda,\sigma(C)] - a}.
\end{equation}
\end{thm}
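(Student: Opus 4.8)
The plan is to pass to the first Schur complement and combine the identity $\sigma(S)\cap U=\sigma(\mathcal{M})\cap U$ with the self-adjointness of $S(\lambda)$ (for real $\lambda\in\rho(C)$) and the spectral theorem for $A$. First I would dispose of the trivial case $\lambda\in\sigma(A)$: then the left-hand side of \eqref{new} vanishes while the right-hand side is non-negative, so there is nothing to prove. Hence assume $\lambda\in\rho(A)$ and set $d:=\dist[\lambda,\sigma(A)]>0$ and $D:=\dist[\lambda,\sigma(C)]>a$. Since $\lambda\in\sigma(\mathcal{M})\cap U$ and $\sigma(S)$ coincides with $\sigma(\mathcal{M})$ on $U$, we have $\lambda\in\sigma(S)$, i.e.\ $0\in\sigma(S(\lambda))$; as $S(\lambda)$ is self-adjoint there are unit vectors $x_n$ in its domain with $\|S(\lambda)x_n\|\to0$. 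Using the description of $S(\lambda)$ through the form $s_\lambda$ recalled in Section~1.1 — in particular that $\Dom(A)\cap\Dom(B^*)=\Dom(A)$ is a core for $S(\lambda)$ and that $S(\lambda)x=(A-\lambda)x-B(C-\lambda)^{-1}B^*x$ for $x\in\Dom(A)$ (the last term read off from $s_\lambda$ when $B$ is unbounded) — I may in addition take $x_n\in\Dom(A)$.

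Now I would use three facts. The spectral theorem for $A$ gives $\|(A-\lambda)x_n\|\ge d\|x_n\|=d$. Next, \eqref{domcons} together with $\mathfrak{t}[x_n]=\langle(A-\lambda)x_n,x_n\rangle+\lambda$ (recall $\|x_n\|=1$) gives $\|B^*x_n\|^2\le a\langle(A-\lambda)x_n,x_n\rangle+a\lambda+b$; since the left-hand side is non-negative this quantity equals its own modulus, hence is at most $a\|(A-\lambda)x_n\|+|a\lambda+b|$. Finally, from the identity $S(\lambda)x_n=(A-\lambda)x_n-B(C-\lambda)^{-1}B^*x_n$ and $\|(C-\lambda)^{-1}\|\le D^{-1}$ one gets, on taking inner products with $x_n$, the quadratic-form bound $|\langle(A-\lambda)x_n,x_n\rangle|\le\|S(\lambda)x_n\|+\|B^*x_n\|^2/D$.

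The crucial step is to replace the form estimate by an estimate on $\|(A-\lambda)x_n\|$ itself, so that it can be paired with $\|(A-\lambda)x_n\|\ge d$. Here \eqref{domcons} is exactly the statement that $K:=B^*(A-\upsilon)^{-1/2}$ is bounded, so $B^*=K(A-\upsilon)^{1/2}$ on $\Dom(|A|^{1/2})$, $B=(A-\upsilon)^{1/2}K^*$, and $B(C-\lambda)^{-1}B^*x_n=(A-\upsilon)^{1/2}K^*(C-\lambda)^{-1}K(A-\upsilon)^{1/2}x_n$. Using this together with \eqref{domcons} once more to control $(A-\upsilon)^{1/2}x_n$ and $\|B^*x_n\|$ in terms of $\|(A-\lambda)x_n\|$, and absorbing the vanishing term $\|S(\lambda)x_n\|$, I would arrive at
\[
\|(A-\lambda)x_n\|\le\frac{a\|(A-\lambda)x_n\|+|a\lambda+b|}{D}+o(1).
\]
Since $D>a$, writing $P_n:=\|(A-\lambda)x_n\|$ this rearranges to $P_n(D-a)\le|a\lambda+b|+o(1)$; combining with $P_n\ge d$ and letting $n\to\infty$ gives $d(D-a)\le|a\lambda+b|$, which is \eqref{new}.

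The step I expect to be the main obstacle is precisely this passage from the quadratic-form bound to the operator-norm bound: when $B$ is unbounded $B(C-\lambda)^{-1}B^*$ is not a bounded operator, so $\|B(C-\lambda)^{-1}B^*x_n\|$ cannot be controlled by $\|B^*x_n\|$ alone, and one must exploit that this vector equals $(A-\lambda)x_n-S(\lambda)x_n$ and use the factorisation through $(A-\upsilon)^{1/2}$ provided by \eqref{domcons}. A secondary technical point is the justification that the approximate eigenvectors $x_n$ may be taken in $\Dom(A)$ with the stated action of $S(\lambda)$: for a diagonally-dominant matrix this rests on $\Dom(|C|^{1/2})\subseteq\Dom(B)$, and for a top-dominant matrix on the representation of $S(\lambda)$ via $s_\lambda$ from \cite{math0}.
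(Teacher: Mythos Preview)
Your sketch correctly isolates the main obstacle, but the resolution you propose does not close it. The factorisation $B(C-\lambda)^{-1}B^*x_n=(A-\upsilon)^{1/2}K^*(C-\lambda)^{-1}K(A-\upsilon)^{1/2}x_n$ with $K=B^*(A-\upsilon)^{-1/2}$ bounded places an \emph{unbounded} factor $(A-\upsilon)^{1/2}$ on the outside. You can indeed control $\|(A-\upsilon)^{1/2}x_n\|$ and $\|B^*x_n\|$ in terms of $\|(A-\lambda)x_n\|$, but that only bounds the norm of the inner vector $w_n:=K^*(C-\lambda)^{-1}K(A-\upsilon)^{1/2}x_n$; it gives no control whatsoever on $\|(A-\upsilon)^{1/2}w_n\|$, because $w_n$ lies merely in $\Ran(K^*)\subset\mathcal{H}_1$, with no a~priori smoothness. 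So the displayed inequality $\|(A-\lambda)x_n\|\le D^{-1}\bigl(a\|(A-\lambda)x_n\|+|a\lambda+b|\bigr)+o(1)$ is not obtainable this way. A secondary but related problem is that the formula $S(\lambda)x=(A-\lambda)x-B(C-\lambda)^{-1}B^*x$ need not hold on $\Dom(A)$ when $B$ is unbounded (nothing forces $(C-\lambda)^{-1}B^*x\in\Dom(B)$), and there is no reason for $\Dom(A)$ to be a core for $S(\lambda)$; $S(\lambda)$ is defined only via the form $s_\lambda$ and the representation theorem.

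The paper sidesteps both issues by working entirely at the form level and arguing by contradiction. Assuming $\lambda\notin\sigma(A)$ and \eqref{new} fails, one first proves the pointwise form bound
\[
\bigl|\langle(C-\lambda I)^{-1}B^*x,B^*x\rangle\bigr|\le\delta\,\bigl\||A-\lambda I|^{1/2}x\bigr\|^2,\qquad
\delta=\frac{a}{\dist[\lambda,\sigma(C)]}+\frac{|a\lambda+b|}{\dist[\lambda,\sigma(A)]\dist[\lambda,\sigma(C)]}<1,
\]
then invokes \cite[Lemma~VI.3.1]{katopert} to produce a bounded $G$ with $\|G\|\le\delta$ representing this form relative to $|A-\lambda|^{1/2}$. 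One then checks, again purely through forms, that $S(\lambda)=|A-\lambda|^{1/2}\bigl((A-\lambda)|A-\lambda|^{-1}+G\bigr)|A-\lambda|^{1/2}$; since the middle factor is a unitary plus a contraction, a Neumann-series argument yields a bounded inverse for $S(\lambda)$, contradicting $0\in\sigma(S(\lambda))$. The point is that the unbounded factors $|A-\lambda|^{1/2}$ sit symmetrically on the outside of an explicitly invertible bounded operator, so no norm estimate on $B(C-\lambda)^{-1}B^*$ is ever needed.
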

\begin{proof}
Since $\lambda\notin\sigma(C)$ and $\lambda\in U$, it follows that $S(\lambda)$ is well defined and $0\in\sigma(S(\lambda))$, that is, $S(\lambda)$ does not have a bounded inverse. If $\lambda\in\sigma(A)$ then the assertion is obvious. Suppose that $\lambda\notin\sigma(A)$ and that \eqref{new} is false. With $\mathfrak{t}$ the quadratic form associated to $A$, and using \eqref{domcons}, we have for all $x\in \Dom (\vert A\vert^{\frac{1}{2}})$
\begin{align*}
\vert\langle(C - \lambda I)^{-1}B^*x,B^*x\rangle\vert
&\le\frac{\Vert B^*x\Vert^2}{\dist[\lambda,\sigma(C)]}\\
&\le\frac{a\mathfrak{t}[x]}{\dist[\lambda,\sigma(C)]} + \frac{b\langle x,x\rangle}{\dist[\lambda,\sigma(C)]}\\
&=\frac{a\mathfrak{t}[x] - a\lambda\langle x,x\rangle}{\dist[\lambda,\sigma(C)]} + \frac{a\lambda\langle x,x\rangle+b\langle x,x\rangle}{\dist[\lambda,\sigma(C)]}\\
&\le\frac{a\langle\vert A - \lambda I\vert^{\frac{1}{2}}x,\vert A - \lambda I\vert^{\frac{1}{2}}x\rangle}{\dist[\lambda,\sigma(C)]}\\
&+~ \frac{\vert a\lambda + b\vert\langle\vert A - \lambda I\vert^{\frac{1}{2}}x,\vert A - \lambda I\vert^{\frac{1}{2}}x\rangle}{\dist[\lambda,\sigma(A)]\dist[\lambda,\sigma(C)]}.
\end{align*}
That is,
\begin{displaymath}
\vert\langle(C - \lambda I)^{-1}B^*x,B^*x\rangle\vert
\le \delta\langle\vert A - \lambda I\vert^{\frac{1}{2}}x,\vert A - \lambda I\vert^{\frac{1}{2}}x\rangle,
\end{displaymath}
where
\begin{displaymath}
\delta = \frac{a}{\dist[\lambda,\sigma(C)]} + \frac{\vert a\lambda + b\vert}{\dist[\lambda,\sigma(A)]\dist[\lambda,\sigma(C)]}.
\end{displaymath}
Since \eqref{new} is assumed to be false, we have $\delta<1$. By \cite[Lemma VI.3.1]{katopert} there exists an operator $G\in\mathcal{B}(\mathcal{H}_1)$ with $\Vert G\Vert\le\delta$, such that for all $x,y\in \Dom (\vert A\vert^{\frac{1}{2}})$ we have
\begin{displaymath}
-\langle(C - \lambda I)^{-1}B^*x,B^*y\rangle = \langle G\vert A - \lambda I\vert^{\frac{1}{2}}x,\vert A - \lambda I\vert^{\frac{1}{2}}y\rangle.
\end{displaymath}
Consider the operator $T$ defined as follows,
\begin{align*}
T &= \vert A - \lambda I\vert^\frac{1}{2}\big((A - \lambda I)\vert A - \lambda I\vert^{-1} + G\big)\vert A - \lambda I\vert^\frac{1}{2}\\
&= \vert A - \lambda I\vert^\frac{1}{2}\big(I + G\vert A - \lambda I\vert(A - \lambda I)^{-1}\big)(A - \lambda I)\vert A - \lambda I\vert^{-1}\vert A - \lambda I\vert^\frac{1}{2},
\end{align*}
note that $\Dom (T)\subseteq \Dom (\vert A\vert^{\frac{1}{2}})$. Since
\begin{displaymath}
\Vert G\vert A - \lambda I\vert(A - \lambda I)^{-1}\Vert \le \delta\Vert\vert A - \lambda I\vert(A - \lambda I)^{-1}\Vert < 1,
\end{displaymath}
we deduce that $(I + G\vert A - \lambda I\vert(A - \lambda I)^{-1})^{-1}$ exists and is bounded. It follows that $T^{-1}$ exists and is bounded, that is,
\begin{displaymath}
T^{-1} = \vert A - \lambda I\vert^{-\frac{1}{2}}\vert A - \lambda I\vert(A - \lambda I)^{-1}\big(I + G\vert A - \lambda I\vert(A - \lambda I)^{-1}\big)^{-1}\vert A - \lambda I\vert^{-\frac{1}{2}}.
\end{displaymath}
The Schur complement $S(\lambda)$ is the unique operator which satisfies $\Dom (S(\lambda))\subseteq \Dom (\vert A\vert^{\frac{1}{2}})$ and
$\langle S(\lambda)x,y\rangle = s_\lambda(x,y)$ for all $x\in \Dom (S(\lambda))$ and $y\in \Dom (\vert A\vert^{\frac{1}{2}})$. Also, $\Dom (T)\subseteq \Dom (\vert A\vert^{\frac{1}{2}})$, and for all $x\in \Dom (T)$ and $y\in \Dom (\vert A\vert^{\frac{1}{2}})$, we have
\begin{align*}
\langle Tx,y\rangle &= \langle\vert A - \lambda I\vert^{\frac{1}{2}}((A - \lambda I)\vert A - \lambda I\vert^{-1} + G)\vert A - \lambda I\vert^{\frac{1}{2}}x,y\rangle\\
&= \langle((A - \lambda I)\vert A - \lambda I\vert^{-1} + G)\vert A - \lambda I\vert^{\frac{1}{2}}x,\vert A - \lambda I\vert^{\frac{1}{2}}y\rangle\\
&= \langle(A - \lambda I)\vert A - \lambda I\vert^{-1}\vert A - \lambda I\vert^{\frac{1}{2}}x,\vert A - \lambda I\vert^{\frac{1}{2}}y\rangle\\
&+~\langle G\vert A - \lambda I\vert^{\frac{1}{2}}x,\vert A - \lambda I\vert^{\frac{1}{2}}y\rangle\\
&= \langle(A - \upsilon I)\vert A - \lambda I\vert^{-1}\vert A - \lambda I\vert^{\frac{1}{2}}x,\vert A - \lambda I\vert^{\frac{1}{2}}y\rangle\\
&+~\langle G\vert A - \lambda I\vert^{\frac{1}{2}}x,\vert A - \lambda I\vert^{\frac{1}{2}}y\rangle\\
&+~(\upsilon - \lambda)\langle\vert A - \lambda I\vert^{-1}\vert A - \lambda I\vert^{\frac{1}{2}}x,\vert A - \lambda I\vert^{\frac{1}{2}}y\rangle\\
&=\langle(A - \upsilon I)^{\frac{1}{2}}x,(A - \upsilon I)^{\frac{1}{2}}y\rangle + (\upsilon - \lambda)\langle x,y\rangle - \langle(C-\lambda I)^{-1}B^*x,B^*y\rangle\\
&= s_\lambda(x,y),
\end{align*}
from which we deduce that $T = S(\lambda)$, and therefore $S(\lambda)^{-1}$ exists and is bounded. However, $\lambda\in\sigma(\mathcal{M})\cap U$ and therefore  $\lambda\in\sigma(S)$, the result follows from the contradiction.
\end{proof}

\begin{cor}\label{c1}
Let $c= \max\sigma(C)$ and $\lambda\in\sigma(\mathcal{M})\cap(c+a,\infty)$, then
\begin{equation}\label{newe}
\dist[\lambda,\sigma(A)]\le\frac{a\lambda + b}{\lambda - c - a}.
\end{equation}
If $\lambda\in[\mu,\mu + r]$ where $(\mu,\mu + 2r)\subset\rho(A)$ and $\mu\in\sigma(A)$, then $\lambda\in[\alpha^-,\alpha^+]$ where
\begin{eqnarray}\label{newer1}
\alpha^\pm = \frac{\mu + c + 2a}{2} \pm \sqrt{\bigg(\frac{\mu - c}{2}\bigg)^2 + a(a + c) + b}.
\end{eqnarray}
If $\lambda\in(\mu-r,\mu]$ where $(\mu-2r,\mu)\subset\rho(A)$, $\mu\in\sigma(A)$, and $(\mu - c)^2 > 4a\mu + 4b$ then $\lambda\notin(\beta^-,\beta^+)$ where
\begin{eqnarray}\label{newer2}
\beta^\pm = \frac{\mu + c}{2} \pm \sqrt{\bigg(\frac{\mu - c}{2}\bigg)^2 - (a\mu + b)}.
\end{eqnarray}
\end{cor}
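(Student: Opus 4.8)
The plan is to deduce all three statements from Theorem~\ref{thm0}, the remaining work being elementary algebra. For the inequality \eqref{newe}: since $C$ is bounded above with $\max\sigma(C)=c$, one has $\sigma(C)\subseteq(-\infty,c]$, so for $\lambda>c+a$ we get $\dist[\lambda,\sigma(C)]=\lambda-c>a$; moreover $\Re\lambda=\lambda>c$, so $\lambda$ lies in the set $\{z:\Re z>c\}\subseteq\rho(C)$ on which $\sigma(S)$ and $\sigma(\mathcal{M})$ coincide, and Theorem~\ref{thm0} applies and gives $\dist[\lambda,\sigma(A)]\le\vert a\lambda+b\vert/(\lambda-c-a)$. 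To replace the modulus by $a\lambda+b$ I would note that for real $\lambda>c$ the Schur form satisfies
\[
s_\lambda(x,x)=\mathfrak{t}[x]-\lambda\Vert x\Vert^2-\langle(C-\lambda I)^{-1}B^*x,B^*x\rangle\ge(\min\sigma(A)-\lambda)\Vert x\Vert^2,\qquad x\in\Dom(\vert A\vert^{\frac12}),
\]
because $(C-\lambda I)^{-1}\le0$; hence $S(\lambda)\ge(\min\sigma(A)-\lambda)I$, so $0\notin\sigma(S(\lambda))$ when $\lambda<\min\sigma(A)$, and therefore $\sigma(\mathcal{M})\cap(c,\infty)\subseteq[\min\sigma(A),\infty)$. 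As \eqref{domcons} forces $a\min\sigma(A)+b\ge0$, this gives $a\lambda+b\ge0$ for $\lambda\in\sigma(\mathcal{M})\cap(c+a,\infty)$, and \eqref{newe} follows.

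For the second assertion the spectral-gap hypotheses pin down $\dist[\lambda,\sigma(A)]$: since $\mu\in\sigma(A)$, $(\mu,\mu+2r)\subseteq\rho(A)$ and $\mu\le\lambda\le\mu+r$, the point of $\sigma(A)$ nearest $\lambda$ is $\mu$ (a spectral point $\le\mu$ is at distance $\ge\lambda-\mu$, and one $\ge\mu+2r$ at distance $\ge r\ge\lambda-\mu$), so $\dist[\lambda,\sigma(A)]=\lambda-\mu$. Feeding this into \eqref{newe} and multiplying by $\lambda-c-a>0$ yields $(\lambda-\mu)(\lambda-c-a)\le a\lambda+b$, i.e.
\[
\lambda^2-(\mu+c+2a)\lambda+(\mu c+\mu a-b)\le0 .
\]
A short computation shows the discriminant of this quadratic equals $(\mu-c)^2+4a(a+c)+4b$, so its roots are exactly $\alpha^\pm$ from \eqref{newer1}; since a real number (namely $\lambda$) satisfies the inequality, these roots are real and $\lambda\in[\alpha^-,\alpha^+]$.

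The third assertion is the same argument with the gap placed below $\mu$: from $\mu\in\sigma(A)$, $(\mu-2r,\mu)\subseteq\rho(A)$ and $\mu-r<\lambda\le\mu$ one gets $\dist[\lambda,\sigma(A)]=\mu-\lambda$, so \eqref{newe} becomes $(\mu-\lambda)(\lambda-c-a)\le a\lambda+b$, i.e.
\[
\lambda^2-(\mu+c)\lambda+(\mu c+\mu a+b)\ge0 .
\]
The roots of the left-hand side are $\beta^\pm$ from \eqref{newer2}, and the hypothesis $(\mu-c)^2>4a\mu+4b$ is precisely the statement that its discriminant $(\mu-c)^2-4(a\mu+b)$ is positive, so $\beta^-<\beta^+$ are real; a number satisfying the displayed inequality lies outside the open interval between the roots, whence $\lambda\notin(\beta^-,\beta^+)$.

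The computations are routine. The only two steps calling for a little care are the removal of the absolute value in \eqref{new} --- handled by the positivity of $S(\lambda)$ below $\min\sigma(A)$ --- and extracting $\dist[\lambda,\sigma(A)]$ from the gap hypotheses; I do not expect a genuine obstacle.
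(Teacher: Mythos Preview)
Your proposal is correct and follows essentially the same approach as the paper: both deduce \eqref{newe} from Theorem~\ref{thm0} after observing that $s_\lambda$ is strictly positive on $(c,\min\sigma(A))$ (so $\lambda\ge\min\sigma(A)$ and $a\lambda+b\ge0$), and then identify $\dist[\lambda,\sigma(A)]$ as $\lambda-\mu$ or $\mu-\lambda$ from the gap hypotheses to obtain the quadratic inequalities whose roots are $\alpha^\pm$ and $\beta^\pm$. Your write-up is slightly more explicit about why $\dist[\lambda,\sigma(A)]$ takes the stated values and why the existence of a real solution forces $\alpha^\pm\in\mathbb{R}$, but the argument is the same.
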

\begin{proof}
Note that $a\lambda + b\ge 0$ whenever $\lambda\ge\min\sigma(A)$, this follows from \eqref{domcons}. If $c<\min\sigma(A)$ then $(c,\min\sigma(A))\subset\rho(\mathcal{M})$ since $s_\lambda$ is a strictly positive form for $\lambda
\in(c,\min\sigma(A))$. Therefore \eqref{newe} follows from \eqref{new}. We now prove the second assertion. Using \eqref{newe} we obtain
\begin{displaymath}
\lambda - \mu = \dist[\lambda,\sigma(A)]\le\frac{a\lambda + b}{\lambda - c - a}
\end{displaymath}
and therefore
\begin{equation}\label{ne}
\lambda^2 - (\mu + c + 2a)\lambda + (c + a)\mu - b \le 0.
\end{equation}
From \eqref{ne} we deduce that $\alpha^\pm\in\mathbb{R}$ and $\lambda\in[\alpha^-,\alpha^+]$. For the final assertion, we note that $\dist[\lambda,\sigma(A)] = \mu - \lambda$, and obtain
\begin{displaymath}
\lambda^2 - (\mu + c)\lambda + (c + a)\mu + b \ge 0,
\end{displaymath}
from which $\lambda\notin(\beta^-,\beta^+)$ follows.
\end{proof}

Although $c=\max\sigma(C)$ may not be below $\min\sigma_{\ess}(\mathcal{M})$, the eigenvalues of $\mathcal{M}$ which are greater than $c$, but less than $\lambda_e=\min(\sigma_{\ess}(\mathcal{M})\cap(c,\infty))$, can often be approximated using a variational principle, which we now describe. For $\gamma\in(c,\infty)$, let $\mathcal{L}_{(-\infty,0)}(S(\gamma))$ be the spectral subspace of $S(\gamma)$ corresponding to the interval $(-\infty,0)$. We set $\kappa(\gamma) = \dim(\mathcal{L}_{(-\infty,0)}(S(\gamma)))$ and assume there exists a $\gamma\in(c,\infty)$ such that $\kappa(\gamma)<\infty$, then there exists an $\alpha$ such that $(c,\alpha)\subset\rho(\mathcal{M})$; see \cite[Theorem 3.1]{math}. Set $\kappa = \kappa(\alpha)$ and let $\lambda_1\le\lambda_2\le\cdots\le\lambda_N$, $N\in\mathbb{N}_0\cup\{\infty\}$, be eigenvalues of $\mathcal{M}$ in the interval $(c,\lambda_e)$. Let $\mu_1\le\mu_2\le\cdots\le\mu_M$, $M\in\mathbb{N}_0\cup\{\infty\}$, be the eigenvalues of $A$ below $\sigma_{\ess}(A)$, and set $\mu_k = \min\sigma_{\ess}(A)$ for $k>M$. For $n=1,\dots,N$, we have
\begin{equation}\label{maththias}
\mu_{\kappa+n}\le\lambda_n\le\frac{\mu_{\kappa+n} + c}{2} + \sqrt{\bigg(\frac{\mu_{\kappa+n} - c}{2}\bigg)^2 + a\mu_{\kappa+n} + b};
\end{equation}
see \cite[Corollary 4.1 and Theorem 4.2]{math}. The estimate \eqref{maththias} is particularly useful when $A$ has compact resolvent. In this case, $\sigma_{\ess}(\mathcal{M})\cap(c,\infty) = \emptyset$, $\sigma(\mathcal{M})\cap(c,\infty)$ consists of a sequence of eigenvalues $\lambda_n\to\infty$, and $\kappa(\gamma)<\infty$ for all $\gamma\in(c,\infty)$; see \cite[Theorem 4.5]{math0}. Corollary \ref{c1} is unlikely to offer improvements to the estimate \eqref{maththias}, however, we can obtain information about the resolvent set in the region $(\lambda_e,\infty)$ as the corollary below shows. Also, in the theorem below, we are able to obtain spectral enclosures for spectral points in the interval $(c,\infty)$, moreover, we are not restricted to approximating the discrete spectrum and we do not require the finiteness of $\kappa(\gamma)$.

\begin{cor}\label{c1a}
Let $a + c < \mu_1,\mu_2\in\sigma(A)$ and $(\mu_1,\mu_2)\subset\rho(A)$. If $\beta_2^-<(\mu_1 + \mu_2)/2$ and $\alpha_1^+<\beta_2^+$ where
\begin{align*}
\alpha_1^\pm &= \frac{\mu_1 + c + 2a}{2} \pm \sqrt{\bigg(\frac{\mu_1 - c}{2}\bigg)^2 + a(a + c) + b},\\
\beta_2^\pm &= \frac{\mu_2 + c}{2} \pm \sqrt{\bigg(\frac{\mu_2 - c}{2}\bigg)^2 - (a\mu_2 + b)},
\end{align*}
then $\mu_1\le\alpha_1^+<\beta_2^+\le\mu_2$ and $(\alpha_1^+,\beta_2^+)\subset\rho(\mathcal{M})$.
\end{cor}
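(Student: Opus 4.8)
The plan is to reduce the statement to Corollary~\ref{c1} together with elementary facts about the two quadratics whose roots are $\alpha_1^\pm$ and $\beta_2^\pm$. Set $q_1(t) = t^2 - (\mu_1 + c + 2a)t + (c+a)\mu_1 - b$ and $q_2(t) = t^2 - (\mu_2 + c)t + (c+a)\mu_2 + b$; completing the square shows that $\alpha_1^\pm$ are the roots of $q_1$ and $\beta_2^\pm$ the roots of $q_2$, and both are upward parabolas. First I would establish the ordering $\mu_1 \le \alpha_1^+ < \beta_2^+ \le \mu_2$. Since $\mu_1 \ge \min\sigma(A)$, \eqref{domcons} gives $a\mu_1 + b \ge 0$, and a direct computation yields $q_1(\mu_1) = -(a\mu_1 + b) \le 0$, so $\mu_1 \in [\alpha_1^-, \alpha_1^+]$, in particular $\mu_1 \le \alpha_1^+$. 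Likewise $q_2(\mu_2) = a\mu_2 + b \ge 0$, so $\mu_2 \notin (\beta_2^-, \beta_2^+)$; as the hypothesis $\beta_2^- < (\mu_1 + \mu_2)/2 < \mu_2$ excludes $\mu_2 \le \beta_2^-$, we must have $\mu_2 \ge \beta_2^+$. Combining with the standing hypothesis $\alpha_1^+ < \beta_2^+$ gives the chain, and in particular $(\alpha_1^+, \beta_2^+) \subset (\mu_1, \mu_2) \subset \rho(A)$.

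It then remains to prove $(\alpha_1^+, \beta_2^+) \cap \sigma(\mathcal{M}) = \emptyset$. Suppose not, and fix $\lambda \in \sigma(\mathcal{M}) \cap (\alpha_1^+, \beta_2^+)$. Since $\lambda > \alpha_1^+ \ge \mu_1 > a + c$, the estimate \eqref{newe} applies and gives $\dist[\lambda, \sigma(A)] \le (a\lambda + b)/(\lambda - c - a)$. Because $\lambda$ lies in the bounded gap $(\mu_1, \mu_2)$ of $\sigma(A)$ whose endpoints belong to $\sigma(A)$, one has $\dist[\lambda, \sigma(A)] = \min(\lambda - \mu_1,\, \mu_2 - \lambda)$. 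I would then distinguish two cases according to the sign of $(\lambda - \mu_1) - (\mu_2 - \lambda)$.

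If $\lambda \le (\mu_1 + \mu_2)/2$, then $\dist[\lambda, \sigma(A)] = \lambda - \mu_1$, so $(\lambda - \mu_1)(\lambda - c - a) \le a\lambda + b$; rearranging, this is precisely $q_1(\lambda) \le 0$, hence $\lambda \in [\alpha_1^-, \alpha_1^+]$, contradicting $\lambda > \alpha_1^+$. If instead $\lambda > (\mu_1 + \mu_2)/2$, then $\dist[\lambda, \sigma(A)] = \mu_2 - \lambda$, so $(\mu_2 - \lambda)(\lambda - c - a) \le a\lambda + b$, which rearranges to $q_2(\lambda) \ge 0$, hence $\lambda \notin (\beta_2^-, \beta_2^+)$; but $\lambda > (\mu_1 + \mu_2)/2 > \beta_2^-$, so $\lambda \ge \beta_2^+$, contradicting $\lambda < \beta_2^+$. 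In either case we reach a contradiction, so no such $\lambda$ exists and $(\alpha_1^+, \beta_2^+) \subset \rho(\mathcal{M})$.

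The whole argument is essentially bookkeeping once Corollary~\ref{c1} is available; the only point needing attention is the case split, together with the observation that the hypothesis $\beta_2^- < (\mu_1 + \mu_2)/2$ is exactly what is used both to pin $\beta_2^+$ below $\mu_2$ and to close the second case. A minor subtlety is that $\dist[\lambda, \sigma(A)]$ must be evaluated as $\min(\lambda - \mu_1, \mu_2 - \lambda)$ rather than merely bounded by it, which is valid here only because $(\mu_1, \mu_2)$ is a gap of $\sigma(A)$ with both endpoints in $\sigma(A)$.
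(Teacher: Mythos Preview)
Your proof is correct and follows essentially the same route as the paper: both arguments split at the midpoint $(\mu_1+\mu_2)/2$ and invoke the two quadratic inequalities from Corollary~\ref{c1} (the paper cites the second and third assertions of that corollary directly, while you re-derive them via the explicit polynomials $q_1,q_2$). Your verification of the endpoint inequalities $\mu_1\le\alpha_1^+$ and $\beta_2^+\le\mu_2$ via $q_1(\mu_1)\le 0$ and $q_2(\mu_2)\ge 0$ is a slightly tidier presentation of what the paper does by a contradiction-and-squaring argument, but the content is the same.
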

\begin{proof}
If $\mu_1 >\alpha_1^+$ we have
\begin{displaymath}
\frac{\mu_1 - c}{2} - a > +\sqrt{\bigg(\frac{\mu_1 - c}{2}\bigg)^2 + a(a + c) + b},
\end{displaymath}
then squaring both sides implies that $a\mu + b < 0$ which contradicts \eqref{domcons}. That $\beta_2^+\le\mu_2$ follows immediately from the definition of $\beta_2^+$. Let $\lambda\in\sigma(\mathcal{M})$. If $\lambda\in(\mu_1,(\mu_1+\mu_2)/2]$, then $\lambda < \alpha_1^+$ by Corollary \ref{c1}. If $\lambda\in((\mu_1+\mu_2)/2,\mu_2)$ then $\dist[\lambda,\sigma(A)]=\mu_2-\lambda$, and by Corollary \ref{c1} $\lambda\notin(\beta_2^-,\beta_2^+)$. Suppose that $\lambda \le \beta_2^-$, then $\vert\mu_1-\lambda\vert<\mu_2-\lambda$ and therefore $\dist[\lambda,\sigma(A)]<\mu_2-\lambda$; from the contradiction we deduce that $\lambda\in(\beta_2^+,\mu_2)$.
\end{proof}

\begin{thm}\label{thm3}
Let $\mu_1,\mu_2,\mu_3,\mu_4\in\sigma(A)$, $\mu_1<\mu_2\le\mu_3<\mu_4$, and the pairs $\{\mu_1,\mu_2\}$, $\{\mu_3,\mu_4\}$ satisfy the hypothesis of Corollary \ref{c1a}. With
\begin{align*}
\beta_2^+ &= \frac{\mu_2 + c}{2} + \sqrt{\bigg(\frac{\mu_2 - c}{2}\bigg)^2 - (a\mu_2 + b)},\\
\alpha_3^+ &= \frac{\mu_3 + c + 2a}{2} + \sqrt{\bigg(\frac{\mu_3 - c}{2}\bigg)^2 + a(a + c) + b},
\end{align*}
we have $[\beta_2^+,\alpha_3^+]\cap\sigma(\mathcal{M})\ne\emptyset$, moreover, $\dim(\mathcal{L}_{[\beta_2^+,\alpha_3^+]}(\mathcal{M})) = \dim(\mathcal{L}_{[\beta_2^+,\alpha_3^+]}(A))$.
\end{thm}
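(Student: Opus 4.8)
The plan is to join $\mathcal{M}$ to the block-diagonal operator $A\oplus C$ by the homotopy $\mathcal{M}_t$, $t\in[0,1]$, where $\mathcal{M}_t$ is the matrix \eqref{l0} with $B$ replaced by $tB$, and to show that along this path the dimension of the relevant spectral subspace does not change. The key preliminary observation is that $tB$ satisfies \eqref{domcons} with the \emph{same} constants $a,b$: for $x\in\Dom(\vert A\vert^{\frac12})$ and $t\in[0,1]$,
\begin{displaymath}
\Vert (tB)^*x\Vert^2=t^2\Vert B^*x\Vert^2\le t^2\big(a\mathfrak{t}[x]+b\Vert x\Vert^2\big)\le a\mathfrak{t}[x]+b\Vert x\Vert^2,
\end{displaymath}
the last step because $a\mathfrak{t}[x]+b\Vert x\Vert^2\ge\Vert B^*x\Vert^2\ge0$ and $t^2\le1$. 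Hence for $t\in(0,1]$ the matrix $\mathcal{M}_t$ is top-dominant, respectively diagonally-dominant, whenever $\mathcal{M}$ is, with the same constants; denote by $\mathcal{M}_t$ its (self-adjoint) closure, and note $\mathcal{M}_0=A\oplus C$. In particular Corollary~\ref{c1a} applies to every $\mathcal{M}_t$, $t\in(0,1]$, and produces exactly the numbers $\alpha_i^\pm,\beta_i^\pm$ of the statement, since these depend only on $a,b$, on $c=\max\sigma(C)$, and on $\mu_1,\dots,\mu_4\in\sigma(A)$.

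Let $\beta_4^+=\frac{\mu_4+c}{2}+\sqrt{\big(\frac{\mu_4-c}{2}\big)^2-(a\mu_4+b)}$ be the number attached by Corollary~\ref{c1a} to the pair $\{\mu_3,\mu_4\}$, and fix $\gamma_1\in(\alpha_1^+,\beta_2^+)$ and $\gamma_2\in(\alpha_3^+,\beta_4^+)$. By Corollary~\ref{c1a} applied to $\mathcal{M}_t$, $(\alpha_1^+,\beta_2^+)\cup(\alpha_3^+,\beta_4^+)\subset\rho(\mathcal{M}_t)$ for $t\in(0,1]$; for $t=0$ the same holds since $(\alpha_1^+,\beta_2^+)\cup(\alpha_3^+,\beta_4^+)\subset(\mu_1,\mu_2)\cup(\mu_3,\mu_4)\subset\rho(A)$ and these intervals lie in $(c,\infty)\subset\rho(C)$. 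Thus $\gamma_1,\gamma_2\in\rho(\mathcal{M}_t)$ for every $t\in[0,1]$ and also $\gamma_1,\gamma_2\in\rho(A)$. Moreover $\gamma_1<\beta_2^+\le\mu_2\le\mu_3\le\alpha_3^+<\gamma_2$, and the intervals $[\gamma_1,\beta_2^+)$ and $(\alpha_3^+,\gamma_2]$ are contained in $(\alpha_1^+,\beta_2^+)$ and $(\alpha_3^+,\beta_4^+)$, hence meet neither $\sigma(\mathcal{M})$ nor $\sigma(A)$. Together with $\gamma_1,\gamma_2\notin\sigma(\mathcal{M})\cup\sigma(A)$ this yields
\begin{displaymath}
\mathcal{L}_{[\gamma_1,\gamma_2]}(\mathcal{M})=\mathcal{L}_{[\beta_2^+,\alpha_3^+]}(\mathcal{M}),\qquad \mathcal{L}_{[\gamma_1,\gamma_2]}(A)=\mathcal{L}_{[\beta_2^+,\alpha_3^+]}(A),
\end{displaymath}
so it suffices to show $\dim\mathcal{L}_{[\gamma_1,\gamma_2]}(\mathcal{M})=\dim\mathcal{L}_{[\gamma_1,\gamma_2]}(A)$ and that this number is positive.

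For the dimension identity I would use continuity of Riesz projections along $t\mapsto\mathcal{M}_t$. Using \eqref{resolvent} together with the fact that the Schur complement of $\mathcal{M}_t$ at a point $z$ with $\Re z>c$ is, at the level of forms, that of $A$ (namely $A-zI$) plus the perturbation $-t^2\langle(C-zI)^{-1}B^*\cdot,B^*\cdot\rangle$, which by \eqref{domcons} is $A$-form bounded with relative bound at most $t^2a/\dist[z,\sigma(C)]$ --- uniformly less than $1$ for $t\in[0,1]$ and $z$ in a neighbourhood of $[\gamma_1,\gamma_2]$, because $\dist[\gamma_i,\sigma(C)]=\gamma_i-c>\mu_1-c>a$ --- one checks that $t\mapsto(\mathcal{M}_t-zI)^{-1}$ is norm continuous on $[0,1]$, locally uniformly in such $z$. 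Choose a rectifiable contour $\Gamma$ encircling $[\gamma_1,\gamma_2]$ with $\Gamma\cap\R=\{\gamma_1,\gamma_2\}$; since $\mathcal{M}_t$ is self-adjoint and $\gamma_1,\gamma_2\in\rho(\mathcal{M}_t)$, the contour avoids $\sigma(\mathcal{M}_t)$ for all $t$, so
\begin{displaymath}
E_{\mathcal{M}_t}\big([\gamma_1,\gamma_2]\big)=-\frac{1}{2\pi i}\oint_\Gamma(\mathcal{M}_t-zI)^{-1}\,dz
\end{displaymath}
is norm continuous in $t$; hence its rank is locally constant, so constant, on $[0,1]$. At $t=0$ we have $E_{\mathcal{M}_0}([\gamma_1,\gamma_2])=E_A([\gamma_1,\gamma_2])\oplus E_C([\gamma_1,\gamma_2])=E_A([\gamma_1,\gamma_2])\oplus0$ because $[\gamma_1,\gamma_2]\subset(c,\infty)\subset\rho(C)$; at $t=1$ we obtain $\dim\mathcal{L}_{[\gamma_1,\gamma_2]}(\mathcal{M})=\dim\mathcal{L}_{[\gamma_1,\gamma_2]}(A)$, and therefore $\dim\mathcal{L}_{[\beta_2^+,\alpha_3^+]}(\mathcal{M})=\dim\mathcal{L}_{[\beta_2^+,\alpha_3^+]}(A)$.

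It remains to see that this common dimension is at least $1$. Since $\beta_2^+\le\mu_2\le\mu_3\le\alpha_3^+$, the point $\mu_2\in\sigma(A)$ lies in $[\beta_2^+,\alpha_3^+]$; using $(\mu_1,\mu_2)\subset\rho(A)$ the spectral theorem gives $\mathcal{L}_{[\beta_2^+,\alpha_3^+]}(A)\ne\{0\}$ --- either $\mu_2$ is isolated in $\sigma(A)$, hence an eigenvalue with $E_A(\{\mu_2\})\ne0$, or $\mu_2$ is a non-isolated point of $\sigma(A)$, necessarily approached only from above (by $(\mu_1,\mu_2)\subset\rho(A)$), so that $\mu_2<\mu_3\le\alpha_3^+$ and $E_A\big([\mu_2,\mu_2+\varepsilon)\big)\ne0$ for small $\varepsilon$, with $[\mu_2,\mu_2+\varepsilon)\subset[\beta_2^+,\alpha_3^+]$. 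Hence $\dim\mathcal{L}_{[\beta_2^+,\alpha_3^+]}(\mathcal{M})\ge1$ and $[\beta_2^+,\alpha_3^+]\cap\sigma(\mathcal{M})\ne\emptyset$. The main obstacle in this scheme is the norm-resolvent continuity of $t\mapsto\mathcal{M}_t$ in the top-dominant case, where the off-diagonal is unbounded and $\mathcal{M}_t$ is only essentially self-adjoint on its natural domain; this must be carried out through the form/Schur-complement description rather than directly, but is routine given the uniform relative form bound above. Everything else --- the passage to interior points $\gamma_i$ and the non-triviality of $\mathcal{L}_{[\beta_2^+,\alpha_3^+]}(A)$ --- is elementary once the separation inequalities of Corollary~\ref{c1a} are in hand.
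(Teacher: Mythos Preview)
Your argument is correct and follows essentially the same route as the paper: the homotopy $t\mapsto\mathcal{M}_t$ with off-diagonal $tB$, the observation that the constants $a,b$ work uniformly in $t$, Corollary~\ref{c1a} to secure two points in $\rho(\mathcal{M}_t)$ flanking $[\beta_2^+,\alpha_3^+]$, and continuity of the Riesz projection around a contour through those points. The paper differs only in minor presentation: it takes the specific midpoints $l=(\alpha_1^++\beta_2^+)/2$, $r=(\alpha_3^++\beta_4^+)/2$, and it establishes analyticity (rather than mere continuity) by recognising $S(\lambda,t)$ as a holomorphic family of type (B) and then reading off holomorphy of $t\mapsto(\mathcal{M}_t-lI)^{-1}$ from the explicit Schur factorisation \eqref{resolvent} on the dense set $\mathcal{H}_1\times\Dom(B(C-\alpha I)^{-1})$, invoking \cite[Theorem~VII.1.3]{katopert}. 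Your form-perturbation sketch for norm-resolvent continuity is an acceptable substitute, and your explicit check that $\dim\mathcal{L}_{[\beta_2^+,\alpha_3^+]}(A)\ge1$ is a point the paper leaves implicit.
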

\begin{proof}
We consider the following family of operators
\begin{equation}\label{family}
\mathcal{M}_0(t) := \left(
\begin{array}{cc}
A & tB\\
tB^* & C
\end{array} \right)\quad\textrm{for}\quad t\in[0,1].
\end{equation}
Denote the corresponding self-adjoint closures by $\mathcal{M}(t)$. Note that the matrix $\mathcal{M}_0(0)$ is diagonally-dominant and therefore $\mathcal{M}(0) = \mathcal{M}_0(0)$, moreover, we have $\dim(\mathcal{L}_{[\beta_2^+,\alpha_3^+]}(\mathcal{M}(0))) = \dim(\mathcal{L}_{[\beta_2^+,\alpha_3^+]}(A))$. We now show that $\mathcal{M}(t)$ is a holomorphic family. Let $l=(\alpha_1^+ + \beta_2^+)/2$ and $r=(\alpha_3^+ + \beta_4^+)/2$, then $l,r\in\rho(A)$. For any $t\in[0,1]$ we have
\begin{displaymath}
\Vert tB^*x\Vert^2\le\Vert B^*x\Vert^2 \le a\mathfrak{t}[x] + b\Vert x\Vert^2\quad\textrm{for all}\quad x\in \Dom(\vert A\vert^{\frac{1}{2}}),
\end{displaymath}
it therefore follows from Corollary \ref{c1a} that $l,r\in\rho(\mathcal{M}(t))$ for every $t\in[0,1]$. We denote by $S(\lambda,t)$ the Schur complement corresponding to the matrix $\mathcal{M}(t)$, that is, the operator corresponding to the form
\begin{displaymath}
s_{\lambda,t}(x,y) = \langle(A - \upsilon I)^{\frac{1}{2}}x,(A - \upsilon I)^{\frac{1}{2}}y\rangle + (\upsilon - \lambda)\langle x,y\rangle - t^2\langle(C-\lambda I)^{-1}B^*x,B^*y\rangle,
\end{displaymath}
and the first representation theorem. $S(\lambda,1)$ is a holomorphic family of type (B) with respect $\lambda\in\{z\in\mathbb{C}:\Re z > c\}$; see \cite[Theorem VII.4.2]{katopert}. Similarly, $S(\lambda,t)$ is a holomorphic family of type (B) with respect to both variables $\lambda\in\{z\in\mathbb{C}:\Re z > c\}$ and $t\in\mathbb{R}$. Let $(x,y)^T,(u,v)^T\in\mathcal{H}_1\times\Dom(B(C-\alpha I)^{-1})$, then using \eqref{resolvent} we have
\begin{align*}
\bigg\langle(\mathcal{M}(t)-lI)^{-1}\left(
\begin{array}{c}
x\\
y
\end{array} \right)&,\left(
\begin{array}{c}
u\\
v
\end{array} \right)\bigg\rangle\\
&= \langle S(l,t)^{-1}x,u\rangle
-t\langle S(l,t)^{-1}B(C - lI)^{-1}y,u\rangle\\
&-t\langle (C - lI)^{-1}B^*S(l,t)^{-1}x,v\rangle + \langle (C - lI)^{-1}y,v\rangle\\
&+t^2\langle (C - lI)^{-1}B^*S(l,t)^{-1}B(C - lI)^{-1}y,v\rangle.
\end{align*}
It follows from the definitions of top-dominant and diagonally-dominant operators, that $\mathcal{H}_1\times\Dom(B(C-\alpha I)^{-1})$ is dense in $\mathcal{H}_1\times\mathcal{H}_2$. That $\mathcal{M}(t)$ is holomorphic on $[0,1]$ follows from \cite[Theorem III.3.12 and Theorem VII.1.3]{katopert} and the fact that $S(l,t)$ is holomorphic on $t\in\mathbb{R}$. Let $E_t([\beta_2^+,\alpha_3^+])$ be the spectral measure corresponding to the operator $\mathcal{M}(t)$ and the interval $[\beta_2^+,\alpha_3^+]$. For any $t_1,t_2\in[0,1]$ we have
\begin{displaymath}
E_{t_1} - E_{t_2} = -\frac{1}{2\pi i}\int_\Gamma (\mathcal{M}(t_1) -\zeta)^{-1} - (\mathcal{M}(t_2) -\zeta)^{-1}~d\zeta
\end{displaymath}
where $\Gamma$ is a circle which passes through $l$ and $r$, from which the result follows.
\end{proof}

To obtain accurate approximation of the spectrum, particularly of the discrete spectrum, a numerical method is likely to be necessary. In the region $(\lambda_e,\infty)$ the traditional Galerkin (finite section) method is unreliable; see for example \cite[Theorem 2.1]{lesh}. There are several numerical methods available for locating spectral points in this region, for example, the second order relative spectrum and the method of Davies and Plum; see \cite{lesh} and \cite{dapl} respectively. Both of these methods can benefit significantly from a priori information about the resolvent set and the dimension of spectral subspaces. The second order relative spectrum requires solving quadratic eigenvalue problems of the form $P(\mathcal{M} - z)^2|_{\mathcal{L}}u=0$ where $P$ is the orthogonal projection onto a finite-dimensional subspace $\mathcal{L}$; see \cite{lesh}. To demonstrate, we suppose the hypothesis of Theorem \ref{thm3} holds and that in addition $\dim(\mathcal{L}_{[\beta_2^+,\alpha_3^+]}(A)) = 1$, and therefore $\dim(\mathcal{L}_{[\beta_2^+,\alpha_3^+]}(\mathcal{M})) = 1$. If we have a solution $z$ to the quadratic eigenvalue $P(\mathcal{M} - z)^2|_{\mathcal{L}}u=0$ for some subspace $\mathcal{L}$, and such that $z$ lies inside the open disc with centre $(\alpha_1^+ + \beta_4^+)/2$ and radius $(\beta_4^- - \alpha_1^+)/2$, then by \cite[Theorem 2.2 and Remark 2.3]{str} we obtain
\begin{displaymath}
\sigma(\mathcal{M})\cap\bigg[\Re z - \frac{\vert\Im z\vert^2}{\beta^+_4 -\Re z},\Re z + \frac{\vert\Im z\vert^2}{\Re z - \alpha_1^+}\bigg] = \sigma(\mathcal{M})\cap[\beta_2^+,\alpha_3^+].
\end{displaymath}

\section{Graph invariant subspaces}

With $\mathcal{E}$ the spectral measure corresponding to $\mathcal{M}$ and $\alpha\in\mathbb{R}$, let
\begin{displaymath}
\mathcal{L}_{(\alpha,\infty)}(\mathcal{M}) := \mathcal{E}((\alpha,\infty))\big(\mathcal{H}_1\times\mathcal{H}_2\big).
\end{displaymath}
This section is concerned with the existence of angular operators corresponding to subspaces $\mathcal{L}_{(\alpha,\infty)}(\mathcal{M})$, that is, operators $K_\alpha:\mathcal{H}_1\to\mathcal{H}_2$ such that
\begin{equation}\label{angularoperator}
\mathcal{L}_{(\alpha,\infty)}(\mathcal{M}) = \bigg\{\left(
\begin{array}{c}
x\\
K_\alpha x
\end{array} \right): x\in \Dom (K_\alpha)\bigg\}.
\end{equation}

\begin{thm}\label{lem}
Let $\mathcal{M}$ be a top-dominant or diagonally-dominant operator matrix and let $C\in\mathcal{B}(\mathcal{H}_2)$. If $\alpha\in\mathbb{R}$ satisfies $\max\sigma(C) = c<\alpha\in(\rho(\mathcal{M})\cap\rho(A))$ and
\begin{equation}\label{thecondition}
\delta = \frac{a}{\alpha - c} + \frac{\vert a\alpha + b\vert}{\dist[\alpha,\sigma(A)](\alpha - c)} < \frac{1}{2},
\end{equation}
then there exists a closed bounded operator $K_\alpha$ which satisfies \eqref{angularoperator}.
\end{thm}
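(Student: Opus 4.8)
The plan is to obtain the angular operator from the resolvent formula \eqref{resolvent}, exploiting the smallness assumption \eqref{thecondition} (which is twice the smallness needed in Theorem \ref{thm0}) to control the relevant operators. Since $\alpha\in\rho(\mathcal{M})\cap\rho(A)$ and $\alpha>c=\max\sigma(C)$, the Schur complement $S(\alpha)$ is well defined, self-adjoint, and boundedly invertible, and the resolvent $(\mathcal{M}-\alpha I)^{-1}$ has the block form \eqref{resolvent} on the dense set $\mathcal{H}_1\times\Dom(F(\alpha))$ with $F(\alpha)=B(C-\alpha I)^{-1}$. The key structural observation is that $\mathcal{L}_{(\alpha,\infty)}(\mathcal{M})$ can be described via the spectral projection $\mathcal{E}((\alpha,\infty))$, and a vector $(x,y)^T$ lies in this subspace precisely when it is a ``positive-spectral-part'' vector for $\mathcal{M}-\alpha I$. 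I would therefore first show that on $\mathcal{L}_{(\alpha,\infty)}(\mathcal{M})$ the first component $x$ determines the second component $y$; equivalently, that the orthogonal projection $P_1:\mathcal{L}_{(\alpha,\infty)}(\mathcal{M})\to\mathcal{H}_1$ onto the first coordinate is injective. Injectivity of $P_1$ is what makes $K_\alpha$ well defined by $K_\alpha x = y$, with $\Dom(K_\alpha)=\Ran(P_1)$.

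The analytic heart of the argument is a quadratic-form estimate showing that on $\mathcal{L}_{(\alpha,\infty)}(\mathcal{M})$ one has $\norm{x}^2 > \tfrac12\norm{(x,y)^T}^2$, say, so that in particular $x\neq 0$ unless $(x,y)^T=0$; the same estimate simultaneously yields boundedness of $K_\alpha$. To get this, I would run the form computation from the proof of Theorem \ref{thm0} but now comparing against $\alpha$ rather than a generic spectral point: using \eqref{domcons} together with the spectral theorem for $A$, one shows that for all $x\in\Dom(\vert A\vert^{1/2})$,
\begin{displaymath}
\vert\langle(C-\alpha I)^{-1}B^*x,B^*x\rangle\vert \le \delta\,\langle\vert A-\alpha I\vert^{1/2}x,\vert A-\alpha I\vert^{1/2}x\rangle
\end{displaymath}
with $\delta<\tfrac12$ the constant in \eqref{thecondition}. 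Feeding this into the bilinear form of $\mathcal{M}-\alpha I$ written out using \eqref{top1}--\eqref{top2} (or the diagonally-dominant form), and splitting the quadratic form of $\mathcal{M}-\alpha I$ into its spectral-positive and spectral-negative parts relative to $A-\alpha I$ and $C-\alpha I$, the bound $\delta<\tfrac12$ forces the ``$\mathcal{H}_1$-positive / $\mathcal{H}_2$-negative'' quadrant to dominate: on $\mathcal{L}_{(\alpha,\infty)}(\mathcal{M})$ the form $\langle(\mathcal{M}-\alpha I)w,w\rangle\ge 0$, while on the orthogonal complement of the graph of $K_\alpha$ inside $\mathcal{H}_1\times\mathcal{H}_2$ one can manufacture test vectors on which this form is negative. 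Concretely I would use \eqref{resolvent}: the operator $(C-\alpha I)^{-1}B^*S(\alpha)^{-1}$ appearing in the lower-left block is, after the form identification, exactly $K_\alpha$ composed with $S(\alpha)^{-1}$ on a core, and the smallness of $\delta$ bounds $\norm{S(\alpha)^{-1/2}B(C-\alpha I)^{-1}}$, hence bounds $K_\alpha$.

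In more detail, the cleanest route is: (i) show $S(\alpha)^{-1}$ maps $\mathcal{H}_1$ into $\Dom(\vert A\vert^{1/2})$ with $\norm{\vert A-\alpha I\vert^{1/2}S(\alpha)^{-1/2}}\le(1-\delta)^{-1/2}$ (from $T=S(\alpha)$ in the proof of Theorem \ref{thm0}); (ii) deduce from \eqref{domcons} that $\norm{B^*S(\alpha)^{-1/2}}$ is finite, hence $(C-\alpha I)^{-1}B^*S(\alpha)^{-1}$ is bounded; (iii) define $K_\alpha$ on $\Ran(S(\alpha)^{-1})=\Dom(S(\alpha))\cap\text{(appropriate space)}$ by $K_\alpha := -(C-\alpha I)^{-1}B^*S(\alpha)^{-1}S(\alpha)\big|$, i.e. read off the lower-left entry of \eqref{resolvent}, and verify using the explicit resolvent that $(x, K_\alpha x)^T\in\Ran((\mathcal{M}-\alpha I)^{-1}\text{ restricted to }\mathcal{H}_1\times 0)$ lies in $\mathcal{L}_{(\alpha,\infty)}(\mathcal{M})$; (iv) conversely, for $(x,y)^T\in\mathcal{L}_{(\alpha,\infty)}(\mathcal{M})$ apply $(\mathcal{M}-\alpha I)$ and use the injectivity of $P_1$ (from the form estimate) to conclude $y=K_\alpha x$; (v) close $K_\alpha$ — since it is bounded on a dense-in-$\Dom(K_\alpha)$ set it extends to its closure, and \eqref{angularoperator} is unchanged by passing to the closure of the graph. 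The main obstacle I anticipate is step (iv), the reverse inclusion: one must show that \emph{every} vector in $\mathcal{L}_{(\alpha,\infty)}(\mathcal{M})$ has nonzero first component and that this first component lies in $\Dom(K_\alpha)$. This is exactly where the factor $\tfrac12$ (rather than $1$) in \eqref{thecondition} is essential — the weaker bound $\delta<1$ only separates $\sigma(\mathcal{M})$ from $\alpha$, whereas $\delta<\tfrac12$ is needed to force the spectral subspace to sit at a strictly acute angle to $\mathcal{H}_1\times 0$, equivalently to prevent $\mathcal{L}_{(\alpha,\infty)}(\mathcal{M})$ from containing any vector of the form $(0,y)^T$ with $y\neq 0$. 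I would prove this by the variational characterisation of $\mathcal{E}((\alpha,\infty))$: if $(0,y)^T\in\mathcal{L}_{(\alpha,\infty)}(\mathcal{M})$ then $\langle(\mathcal{M}-\alpha I)(0,y)^T,(0,y)^T\rangle\ge 0$, but the $(2,2)$ form entry evaluates (via the top-dominant/diagonally-dominant form) to $\langle(C-\alpha I)y,y\rangle\le(c-\alpha)\norm{y}^2<0$, a contradiction, and a perturbation of this argument with $x$ small but nonzero, controlled by $\delta<\tfrac12$, handles the general graph statement.
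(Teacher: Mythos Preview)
Your overall architecture --- show that $\mathcal{L}_{(\alpha,\infty)}(\mathcal{M})$ contains no vector of the form $(0,y)^T$, then show the resulting graph operator is bounded --- matches the paper, and your observation that the form argument $\mathfrak{a}[(0,y)^T]=\langle Cy,y\rangle\le c\Vert y\Vert^2<\alpha\Vert y\Vert^2$ already rules out $(0,y)^T\in\mathcal{L}_{(\alpha,\infty)}(\mathcal{M})$ is correct and in fact simpler than what the paper does for that step (and, as you note, does not even use $\delta<\tfrac12$). However, two parts of your plan do not go through.

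First, step (iii) is a wrong turn. The vector $(\mathcal{M}-\alpha I)^{-1}(u,0)^T=(S(\alpha)^{-1}u,\,-(C-\alpha I)^{-1}B^*S(\alpha)^{-1}u)^T$ has no reason to lie in $\mathcal{L}_{(\alpha,\infty)}(\mathcal{M})$; the resolvent applied to $\mathcal{H}_1\times\{0\}$ does not produce the positive spectral subspace. Correspondingly $K_\alpha$ is \emph{not} $-(C-\alpha I)^{-1}B^*$: for an eigenvector with eigenvalue $\lambda>\alpha$ one has $y=-(C-\lambda I)^{-1}B^*x$, which depends on $\lambda$. The angular operator cannot be read off from a single block of the resolvent.

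Second, and more seriously, your boundedness argument via the quadratic form $\mathfrak{a}$ has a domain gap. A general element of $\mathcal{L}_{(\alpha,\infty)}(\mathcal{M})$ need not lie in $\Dom(\mathfrak{a})=\Dom(\vert A\vert^{1/2})\times\mathcal{H}_2$, so you cannot evaluate $\mathfrak{a}$ on an arbitrary sequence $(x_n,K_\alpha x_n)^T$ with $x_n\to 0$; and even restricting to the form domain, the inequality you get bounds $\Vert K_\alpha x_n\Vert^2$ only in terms of $\mathfrak{t}[x_n]$, which is not controlled by $\Vert x_n\Vert$. This is precisely why the paper works instead with the \emph{resolvent} quadratic form $\langle(\mathcal{M}-\alpha I)^{-1}\phi,\phi\rangle$, which is defined on all of $\mathcal{H}_1\times\mathcal{H}_2$ and is nonnegative on $\mathcal{L}_{(\alpha,\infty)}(\mathcal{M})$. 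Using the block structure \eqref{resolvent} together with the estimates you already listed in (i)--(ii), one obtains
\[
\Big\langle(\mathcal{M}-\alpha I)^{-1}\binom{x}{y},\binom{x}{y}\Big\rangle
\le \langle S(\alpha)^{-1}x,x\rangle - 2\Re\langle y,(C-\alpha I)^{-1}B^*S(\alpha)^{-1}x\rangle
+\Big(\tfrac{\delta}{1-\delta}-1\Big)\langle(\alpha I-C)^{-1}y,y\rangle,
\]
and it is here that $\delta<\tfrac12$ is used, making the last coefficient negative. Along a sequence $(x_n,K_\alpha x_n)^T$ with $\Vert x_n\Vert\to 0$ and $\Vert K_\alpha x_n\Vert\to 1$ the right-hand side becomes strictly negative, contradicting nonnegativity of the resolvent form on the spectral subspace. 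This is the missing mechanism in your sketch.
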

\begin{proof}
Similarly to the proof of Theorem \ref{thm0} we have an operator $G\in\mathcal{B}(\mathcal{H}_1)$ such that $S(\alpha)^{-1}$ admits the following representation,
\begin{displaymath}
S(\alpha)^{-1} = \vert A - \alpha I\vert^{-\frac{1}{2}}\big((A - \alpha I)\vert A - \alpha I\vert^{-1} + G\big)^{-1}\vert A - \alpha I\vert^{-\frac{1}{2}}\quad\textrm{where}\quad\Vert G\Vert\le\delta.
\end{displaymath}
Note that $(A - \alpha I)\vert A - \alpha I\vert^{-1} + G\in\mathcal{B}(\mathcal{H}_1)$ and for all $x\in\mathcal{H}_1$ we have
\begin{displaymath}
\Vert(A - \alpha I)\vert A - \alpha I\vert^{-1}x + Gx\Vert\ge\Vert(A - \alpha I)\vert A - \alpha I\vert^{-1}x\Vert - \Vert Gx\Vert\ge(1-\delta)\Vert x\Vert,
\end{displaymath}
from which we obtain
\begin{align*}
\langle S(\alpha)^{-1}x,x\rangle &=\langle \vert A - \alpha I\vert^{-\frac{1}{2}}\big((A - \alpha I)\vert A - \alpha I\vert^{-1} + G\big)^{-1}\vert A - \alpha I\vert^{-\frac{1}{2}}x,x\rangle\\
&=\langle \big((A - \alpha I)\vert A - \alpha I\vert^{-1} + G\big)^{-1}\vert A - \alpha I\vert^{-\frac{1}{2}}x,\vert A - \alpha I\vert^{-\frac{1}{2}}x\rangle\\
&\le\Vert\big((A - \alpha I)\vert A - \alpha I\vert^{-1} + G\big)^{-1}\vert A - \alpha I\vert^{-\frac{1}{2}}x\Vert\Vert\vert A - \alpha I\vert^{-\frac{1}{2}}x\Vert\\
&\le\Vert\big((A - \alpha I)\vert A - \alpha I\vert^{-1} + G\big)^{-1}\Vert\Vert\vert A - \alpha I\vert^{-\frac{1}{2}}x\Vert^2\\
&\le(1-\delta)^{-1}\Vert\vert A - \alpha I\vert^{-\frac{1}{2}}x\Vert^2.
\end{align*}
Let $x\in\mathcal{H}_1$ and $y\in \Dom(B(C - \alpha I)^{-1})$. Using \eqref{resolvent} we have
\begin{align*}
\bigg\langle(\mathcal{M}-\alpha I)^{-1}\left(
\begin{array}{c}
x\\
y
\end{array} \right)&,\left(
\begin{array}{c}
x\\
y
\end{array} \right)\bigg\rangle\\
&= \langle S(\alpha)^{-1}x,x\rangle - \langle S(\alpha)^{-1}B(C - \alpha I)^{-1}y,x\rangle\\
&- \langle (C - \alpha I)^{-1}B^*S(\alpha)^{-1}x,y\rangle + \langle (C - \alpha I)^{-1}y,y\rangle\\
&+ \langle (C - \alpha I)^{-1}B^*S(\alpha)^{-1}B(C - \alpha I)^{-1}y,y\rangle\\
&=\langle S(\alpha)^{-1}x,x\rangle - \langle y,(C - \alpha I)^{-1}B^*S(\alpha)^{-1}x\rangle\\
&- \langle (C - \alpha I)^{-1}B^*S(\alpha)^{-1}x,y\rangle + \langle (C - \alpha I)^{-1}y,y\rangle\\
&+ \langle S(\alpha)^{-1}B(C - \alpha I)^{-1}y,B(C - \alpha I)^{-1}y\rangle,
\end{align*}
and for the last term on the right-hand side we have the upper bound
\begin{displaymath}
\langle S(\alpha)^{-1}B(C - \alpha I)^{-1}y,B(C - \alpha I)^{-1}y\rangle
\le(1-\delta)^{-1}\Vert\vert A - \alpha I\vert^{-\frac{1}{2}}B(C - \alpha I)^{-1}y\Vert^2.
\end{displaymath}
The operator $B^*\vert A - \alpha I\vert^{-\frac{1}{2}}$ is bounded by the closed graph theorem, and therefore also the adjoint operator is bounded, in particular $\vert A - \alpha I\vert^{-\frac{1}{2}}B$ is bounded. Similarly to the proof of Theorem \ref{thm0} we have for any $x\in\mathcal{H}_1$
\begin{align*}
\Vert B^*\vert A - \alpha I\vert^{-\frac{1}{2}}x\Vert^2 &\le a\mathfrak{t}[\vert A - \alpha I\vert^{-\frac{1}{2}}x] + b\langle\vert A - \alpha I\vert^{-\frac{1}{2}}x,\vert A - \alpha I\vert^{-\frac{1}{2}}x\rangle\\
&\le a\mathfrak{t}[\vert A - \alpha I\vert^{-\frac{1}{2}}x] - a\alpha\Vert\vert A - \alpha I\vert^{-\frac{1}{2}}x\Vert^2 + \frac{\vert a\alpha + b\vert\langle x,x\rangle}{\dist[\alpha,\sigma(A)]}\\
&\le a\langle x,x\rangle + \frac{\vert a\alpha + b\vert\langle x,x\rangle}{\dist[\alpha,\sigma(A)]}\\
&= \delta\dist[\alpha,\sigma(C)]\Vert x\Vert^2.
\end{align*}
Therefore $\Vert\vert A - \alpha I\vert^{-\frac{1}{2}}B\Vert^2\le\delta\dist[\alpha,\sigma(C)]$. With $E$ the spectral measure corresponding to $C$, we obtain
\begin{align*}
\bigg\langle(\mathcal{M}-\alpha I)^{-1}\left(
\begin{array}{c}
x\\
y
\end{array} \right)&,\left(
\begin{array}{c}
x\\
y
\end{array} \right)\bigg\rangle\\
&\le \langle S(\alpha)^{-1}x,x\rangle - 2\Re\langle y,(C - \alpha I)^{-1}B^*S(\alpha)^{-1}x\rangle\\
&+ \langle(C - \alpha I)^{-1}y,y\rangle + \frac{1}{1-\delta}\Vert\vert A - \alpha I\vert^{-\frac{1}{2}}B(C - \alpha I)^{-1}y\Vert^2\\
&\le \langle S(\alpha)^{-1}x,x\rangle - 2\Re\langle y,(C - \alpha I)^{-1}B^*S(\alpha)^{-1}x\rangle\\
&+\langle(C - \alpha I)^{-1}y,y\rangle + \frac{\delta\dist[\alpha,\sigma(C)]}{1-\delta}\Vert(C - \alpha I)^{-1}y\Vert^2\\
&=\langle S(\alpha)^{-1}x,x\rangle - 2\Re\langle y,(C - \alpha I)^{-1}B^*S(\alpha)^{-1}x\rangle\\
&-\langle(\alpha I - C)^{-1}y,y\rangle + \frac{\delta\dist[\alpha,\sigma(C)]}{1-\delta}\int_{\mathbb{R}}\frac{1}{(\alpha - \mu)^2}~d\langle E_\mu y,y\rangle\\
&\le\langle S(\alpha)^{-1}x,x\rangle - 2\Re\langle y,(C - \alpha I)^{-1}B^*S(\alpha)^{-1}x\rangle\\
&-\langle(\alpha I - C)^{-1}y,y\rangle + \frac{\delta}{1-\delta}\int_{\mathbb{R}}\frac{1}{(\alpha - \mu)}~d\langle E_\mu y,y\rangle\\
&=\langle S(\alpha)^{-1}x,x\rangle - 2\Re\langle y,(C - \alpha I)^{-1}B^*S(\alpha)^{-1}x\rangle\\
&+ \bigg(\frac{\delta}{1-\delta}-1\bigg)\langle(\alpha I - C)^{-1}y,y\rangle.
\end{align*}
We see that if $x = 0$ and $y\ne 0$, then the right-hand side is negative. Since $\Dom(B(C - \alpha I)^{-1})$ is dense in $\mathcal{H}_2$ we deduce that $\mathcal{L}_{(\alpha,\infty)}(\mathcal{M})$ is the graph of an operator, denote this operator by $K_\alpha$. The subspace $\mathcal{L}_{(\alpha,\infty)}(\mathcal{M})$ is closed, thus $K_\alpha$ is a closed operator.

We suppose that $K_\alpha$ is unbounded. Therefore, there is a sequence $x_n\to 0$ with $x_n\in\Dom(K_\alpha)$ and $\Vert K_\alpha x_n\Vert\to 1$. Let $x\in\mathcal{H}_1$, $y\in \Dom(B(C - \alpha I)^{-1})$, and note that by the closed graph theorem $(C - \alpha I)^{-1}B^*S(\alpha)^{-1}$ is bounded,
then from the above inequality we have
\begin{align*}
\bigg\langle(\mathcal{M}-\alpha I)^{-1}\left(
\begin{array}{c}
x\\
y
\end{array} \right)&,\left(
\begin{array}{c}
x\\
y
\end{array} \right)\bigg\rangle\\
&\le \Vert S(\alpha)^{-1}\Vert\Vert x\Vert^2
+ 2\Vert(C - \alpha I)^{-1}B^*S(\alpha)^{-1}\Vert\Vert x\Vert\Vert y\Vert\\
&+ \bigg(\frac{\delta}{1-\delta}-1\bigg)\langle(\alpha I - C)^{-1}y,y\rangle.
\end{align*}
Since $\Dom(B(C - \alpha I)^{-1})$ is dense in $\mathcal{H}_2$, for any $y\in\mathcal{H}_2$ there is a sequence $y_n\to y$ with $y_n\in\Dom(B(C - \alpha I)^{-1})$. Evidently, the above inequality holds for all $x\in\mathcal{H}_1$ and for all $y\in\mathcal{H}_2$. We obtain
\begin{displaymath}
\bigg\langle(\mathcal{M}-\alpha I)^{-1}\left(
\begin{array}{c}
x_n\\
K_\alpha x_n
\end{array} \right),\left(
\begin{array}{c}
x_n\\
K_\alpha x_n
\end{array} \right)\bigg\rangle
< 0\quad\textrm{for all sufficiently large }n\in\mathbb{N}.
\end{displaymath}
However, $(x_n,K_\alpha x_n)^T\in\mathcal{L}_{(\alpha,\infty)}(\mathcal{M})$ for all $n\in\mathbb{N}$ and therefore
\begin{displaymath}
\bigg\langle(\mathcal{M}-\alpha I)^{-1}\left(
\begin{array}{c}
x_n\\
K_\alpha x_n
\end{array} \right),\left(
\begin{array}{c}
x_n\\
K_\alpha x_n
\end{array} \right)\bigg\rangle
\ge 0\quad\textrm{for all }n\in\mathbb{N}.
\end{displaymath}
From the contradiction we deduce that $K_\alpha$ is bounded.
\end{proof}

\begin{rem}
If $\{\mu_1,\mu_2\}$ is a pair which satisfies Corollary \ref{c1a} then $(\alpha_1^++\beta_2^+)/2\in(\rho(\mathcal{M})\cap\rho(A))$ and a possible choice for $\alpha$ in \eqref{thecondition} would be $(\alpha_1^++\beta_2^+)/2$. Also, if $\{\mu_n,\mu_{n+1}\}$ is a sequence of such pairs and $\vert\mu_{j+1} - \mu_j\vert\to\infty$, then
\begin{displaymath}
\frac{a}{\alpha_j - c} + \frac{a\alpha_j + b}{\dist[\alpha_j,\sigma(A)](\alpha_j - c)}\to 0.
\end{displaymath}
\end{rem}

For any normalised $(x,y)^T\in\Dom(\mathcal{M})$, it follows from \eqref{top1} and the observation $(A-\upsilon I)^{-\frac{1}{2}}\overline{(A-\upsilon I)^{-\frac{1}{2}}B} = \overline{(A-\upsilon I)^{-1}B}$, that $\overline{(A - \upsilon I)^{-1}B}y\in\Dom(\vert A\vert^\frac{1}{2})$ and $x\in\Dom(\vert A\vert^\frac{1}{2})$. Noting also that the adjoint of $\overline{(A - \upsilon I)^{-\frac{1}{2}}B}$ is given by $B^*(A - \upsilon I)^{-\frac{1}{2}}$, and using \eqref{top2} and \eqref{domcons}, we have
\begin{align*}
\bigg\langle\mathcal{M}\left(
\begin{array}{c}
x\\
y
\end{array} \right)&,
\left(
\begin{array}{c}
x\\
y
\end{array} \right)\bigg\rangle\\
&=\langle (A - \upsilon)(x + \overline{(A - \upsilon I)^{-1}B}y),x\rangle + \upsilon\langle x,x\rangle + \langle B^*x,y\rangle + \langle Cy,y\rangle\\
&=\langle (A - \upsilon)^\frac{1}{2}x,(A - \upsilon)^\frac{1}{2}x\rangle + \upsilon\langle x,x\rangle + \langle B^*x,y\rangle + \langle Cy,y\rangle\\
&+ \langle (A - \upsilon)^\frac{1}{2}\overline{(A - \upsilon I)^{-1}B}y,(A - \upsilon)^\frac{1}{2}x\rangle\\
&= \mathfrak{t}[x] + \langle B^*x,y\rangle + \langle Cy,y\rangle
+ \langle \overline{(A - \upsilon I)^{-\frac{1}{2}}B}y,(A - \upsilon)^\frac{1}{2}x\rangle\\
&=\mathfrak{t}[x] + \langle B^*x,y\rangle + \langle y,B^*x\rangle + \langle Cy,y\rangle\\
&\ge \mathfrak{t}[x] - 2\Vert B^*x\Vert + \langle Cy,y\rangle\\
&\ge \mathfrak{t}[x] - 2\sqrt{a\mathfrak{t}[x] + b\Vert x\Vert^2} + \langle Cy,y\rangle.
\end{align*}
If $C\in\mathcal{B}(\mathcal{H}_2)$ it follows that $\mathcal{M}$ is bounded from below and therefore form closable, the corresponding closed form we denote by $\mathfrak{a}$. Similarly, we see that $\Dom(\mathfrak{a}) = \Dom(\vert A\vert^{\frac{1}{2}})\times\mathcal{H}_2$, and for any $(x,y)^T\in\Dom(\mathfrak{a})$ we have
\begin{equation}\label{quadform}
\mathfrak{a}\left[\left(
\begin{array}{c}
x\\
y
\end{array} \right)\right] =
\mathfrak{t}[x] + \langle B^*x,y\rangle + \langle y,B^*x\rangle + \langle Cy,y\rangle.
\end{equation}

\begin{cor}\label{c3}
Let $\mathcal{M}$ be a top-dominant or diagonally-dominant operator matrix and let $C\in\mathcal{B}(\mathcal{H}_2)$. Suppose that for some $c<\alpha\in\rho(\mathcal{M})$ there exists a closed bounded operator $K_\alpha$ which satisfies \eqref{angularoperator}. If $\dim(\mathcal{L}_{(-\infty,0)}(S(\gamma)))<\infty$ for some $\gamma\in(c,\infty)$, and $(c,\alpha)\cap\sigma_{\ess}(S)=\emptyset$, then there exists a $\tilde{c}\in\mathbb{R}$ with $(c,\tilde{c}]\subset\rho(\mathcal{M})$, and a closed bounded operator $K_c$ which satisfies \eqref{angularoperator}, moreover, $\codim(\Dom(K_c)) =\kappa:= \dim(\mathcal{L}_{(-\infty,0)}(S(\tilde{c}))) <\infty$.
\end{cor}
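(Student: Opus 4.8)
Here is how I would go about it.

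\medskip
\noindent\textbf{Locating a good threshold.} First I would produce a suitable $\tilde c$. The function $\gamma\mapsto\kappa(\gamma):=\dim\mathcal{L}_{(-\infty,0)}(S(\gamma))$ is non-decreasing on $(c,\infty)$: all the forms $s_\gamma$ share the domain $\Dom(|A|^{\frac12})$ and $s_\gamma$ is monotone decreasing in $\gamma$ (because $(\gamma I-C)^{-1}$ is), so by min--max the eigenvalue counting functions of $S(\gamma)$ behave monotonically. By hypothesis some $\kappa(\gamma_0)$ is finite, and by \cite[Theorem 3.1]{math} there is a gap $(c,\alpha_0)\subset\rho(\mathcal{M})$; since $(c,\alpha)\cap\sigma_{\ess}(S)=\emptyset$, the spectrum of $S$ --- hence of $\mathcal{M}$, by the coincidence of their spectra on $\{\Re z>c\}$ --- is discrete in $(c,\alpha)$, so $\kappa$ can only jump at points of $\sigma(\mathcal{M})$ and is therefore finite and constant on the first gap. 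I fix $\tilde c$ in that gap with $\tilde c<\alpha$ and $\tilde c\notin\sigma(\mathcal{M})$, put $\kappa:=\kappa(\tilde c)$, and record that $(c,\tilde c]\subset\rho(\mathcal{M})$, so $\mathcal{E}((c,\tilde c])=0$ and $\mathcal{L}_{(c,\infty)}(\mathcal{M})=\mathcal{L}_{(\tilde c,\infty)}(\mathcal{M})$; also $\alpha_1:=\inf(\sigma(\mathcal{M})\cap(c,\infty))>\tilde c$.

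\medskip
\noindent\textbf{Graph structure and boundedness.} Since $C\in\mathcal{B}(\mathcal{H}_2)$, the form $\mathfrak{a}$ of \eqref{quadform} is the closed form of $\mathcal{M}$, with domain $\Dom(|A|^{\frac12})\times\mathcal{H}_2$. If $(0,y)^T\in\mathcal{L}_{(\tilde c,\infty)}(\mathcal{M})$ then this vector automatically lies in $\Dom(\mathfrak{a})$ and $\tilde c\|y\|^2\le\mathfrak{a}[(0,y)^T]=\langle Cy,y\rangle\le c\|y\|^2$, forcing $y=0$. Hence $\mathcal{L}_{(\tilde c,\infty)}(\mathcal{M})$ is the graph of an operator $K_c$ with $\Dom(K_c)=P_1\mathcal{L}_{(\tilde c,\infty)}(\mathcal{M})$, where $P_1:\mathcal{H}_1\times\mathcal{H}_2\to\mathcal{H}_1$ is the first projection; $K_c$ is closed since $\mathcal{L}_{(\tilde c,\infty)}(\mathcal{M})$ is, and it satisfies \eqref{angularoperator} with $\alpha$ replaced by $c$. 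As $\mathcal{L}_{(\alpha,\infty)}(\mathcal{M})\subseteq\mathcal{L}_{(\tilde c,\infty)}(\mathcal{M})$ one has $K_c\supseteq K_\alpha$; since $\tilde c,\alpha\in\rho(\mathcal{M})$ and $(c,\alpha)\cap\sigma_{\ess}(\mathcal{M})=\emptyset$, the compact set $\sigma(\mathcal{M})\cap[\tilde c,\alpha]$ consists of finitely many eigenvalues of finite multiplicity, so $\mathcal{F}:=\mathcal{L}_{(\tilde c,\alpha]}(\mathcal{M})$ is finite-dimensional and $\mathcal{L}_{(\tilde c,\infty)}(\mathcal{M})=\mathcal{L}_{(\alpha,\infty)}(\mathcal{M})\oplus\mathcal{F}$. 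Therefore $\Dom(K_c)=\Dom(K_\alpha)+P_1\mathcal{F}$, which is closed (the closed domain of a bounded operator plus a finite-dimensional space), and a closed operator on a closed domain is bounded.

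\medskip
\noindent\textbf{The codimension --- the main point.} Note that $z\perp\Dom(K_c)$ is equivalent to $(z,0)^T\in\mathcal{L}_{(-\infty,\tilde c)}(\mathcal{M})=\mathcal{L}_{(-\infty,c]}(\mathcal{M})$, on which $(\mathcal{M}-\tilde c I)^{-1}$ is negative definite (its spectrum there lies in $[\tfrac{1}{c-\tilde c},0)$). Feeding $(z,0)^T\in\mathcal{H}_1\times\Dom(F(\tilde c))$ into the resolvent formula \eqref{resolvent} gives $\langle S(\tilde c)^{-1}z,z\rangle=\langle(\mathcal{M}-\tilde c I)^{-1}(z,0)^T,(z,0)^T\rangle<0$ for every nonzero $z\perp\Dom(K_c)$, so $S(\tilde c)^{-1}$, and hence $S(\tilde c)$, is negative definite on $\Dom(K_c)^\perp$; thus $\dim\Dom(K_c)^\perp\le\dim\mathcal{L}_{(-\infty,0)}(S(\tilde c))=\kappa$. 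For the opposite inequality, take $0\ne z\in\mathcal{L}_{(-\infty,0)}(S(\tilde c))$; this $\kappa$-dimensional space lies in $\Dom(S(\tilde c))\subseteq\Dom(|A|^{\frac12})$, so if in addition $z\in\Dom(K_c)$ then $(z,K_cz)^T\in\mathcal{L}_{(\tilde c,\infty)}(\mathcal{M})\cap\Dom(\mathfrak{a})$, and completing the square in the second variable of \eqref{quadform} (with $D:=\tilde c I-C\ge(\tilde c-c)I>0$) yields
\begin{displaymath}
\mathfrak{a}\!\left[\left(\begin{smallmatrix}z\\ K_cz\end{smallmatrix}\right)\right]-\tilde c\left\|\left(\begin{smallmatrix}z\\ K_cz\end{smallmatrix}\right)\right\|^2=s_{\tilde c}(z,z)-\langle D(K_cz-D^{-1}B^*z),K_cz-D^{-1}B^*z\rangle\le s_{\tilde c}(z,z)=\langle S(\tilde c)z,z\rangle<0 .
\end{displaymath}
But the left-hand side is $\ge(\alpha_1-\tilde c)\|(z,K_cz)^T\|^2>0$, a contradiction. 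Hence $\Dom(K_c)\cap\mathcal{L}_{(-\infty,0)}(S(\tilde c))=\{0\}$, so the $\kappa$-dimensional space $\mathcal{L}_{(-\infty,0)}(S(\tilde c))$ embeds into $\mathcal{H}_1/\Dom(K_c)$ and $\codim\Dom(K_c)\ge\kappa$; together with the upper bound and the closedness of $\Dom(K_c)$ this gives $\codim\Dom(K_c)=\dim\Dom(K_c)^\perp=\kappa$.

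\medskip
\noindent The step I expect to be genuinely delicate is the last one, and within it the domain bookkeeping when $B$ is unbounded: the form identity \eqref{quadform}, the square-completion, and the resolvent representation \eqref{resolvent} must each be applied only to vectors for which they are legitimate --- this is why the reverse inequality is tested against $\mathcal{L}_{(-\infty,0)}(S(\tilde c))\subseteq\Dom(S(\tilde c))$ rather than against arbitrary elements of $\mathcal{L}_{(\tilde c,\infty)}(\mathcal{M})$, which need not lie in $\Dom(\mathfrak{a})$ when $\sigma(\mathcal{M})$ is unbounded above. The monotonicity and discreteness facts used to pin down $\tilde c$, and the finite-rank perturbation argument for closedness of $\Dom(K_c)$, should be routine.
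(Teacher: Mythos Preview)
Your proof is correct and follows the same overall architecture as the paper: establish the graph property for $\mathcal{L}_{(c,\infty)}(\mathcal{M})$, obtain boundedness of $K_c$ as a finite-rank extension of $K_\alpha$, and compute the codimension via the resolvent formula \eqref{resolvent} together with the form inequality linking $\mathfrak{a}$ and $s_{\tilde c}$.

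The one notable difference is in the graph-structure step. The paper works eigenvalue by eigenvalue: for each $\lambda_j\in\sigma(\mathcal{M})\cap(c,\alpha)$ it shows $\Ker(S(\lambda_j))\cap\Dom(K_\alpha)=\{0\}$ via the variational characterisation $\lambda_j=\max_{x\in\mathcal{L}_{(-\infty,0]}(S(\lambda_j))}\max_{y}\mathfrak{a}[(x,y)^T]/\|(x,y)^T\|^2$ from \cite{math0,math}, and then successively extends $K_\alpha$. Your argument is more direct: any $(0,y)^T\in\mathcal{L}_{(\tilde c,\infty)}(\mathcal{M})$ automatically lies in $\Dom(\mathfrak{a})=\Dom(|A|^{1/2})\times\mathcal{H}_2$, and the two-sided estimate $\tilde c\|y\|^2\le\mathfrak{a}[(0,y)^T]=\langle Cy,y\rangle\le c\|y\|^2$ forces $y=0$ in one stroke. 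This bypasses the inductive extension and shows that the existence of $K_\alpha$ is in fact not needed for the graph property --- only for boundedness. Similarly, where the paper invokes \cite[Lemma~3.4]{math} for the lower bound $\codim(\Dom(K_c))\ge\kappa$, you supply the underlying square-completion identity $\mathfrak{a}[(z,y)^T]-\tilde c\|(z,y)^T\|^2=s_{\tilde c}(z,z)-\langle D(y-D^{-1}B^*z),y-D^{-1}B^*z\rangle$ explicitly, making the argument self-contained. The upper bound $\codim(\Dom(K_c))\le\kappa$ is identical in both.
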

\begin{proof}
Let $\lambda_1<\dots<\lambda_n$ be the eigenvalues of $\mathcal{M}$ which lie in the interval $(c,\alpha)$. Let $x_1,\dots,x_m$ be an orthonormal basis of $\Ker(S(\lambda_n))$, and suppose that $\alpha_1x_1 + \dots + \alpha_m x_m = \tilde{x}\in\Dom(K_\alpha)\backslash\{0\}$. Set $\mathcal{L} = \mathcal{L}_{(-\infty,0]}(S(\lambda_n))$. Note that $\mathcal{L}$ is a finite-dimensional subspace of $\Dom(\vert A\vert^{\frac{1}{2}})$ and therefore $(\tilde{x},K_\alpha\tilde{x})^T\in\Dom(\mathfrak{a})$. Using \cite[Lemma 3.5]{math}, \cite[proof of Theorem 2.1]{math0} and \eqref{quadform} we have
\begin{align*}
\alpha &< \mathfrak{a}\left[\left(
\begin{array}{c}
\tilde{x}\\
K_\alpha\tilde{x}
\end{array} \right)\right]\bigg/\left\Vert\left(
\begin{array}{c}
\tilde{x}\\
K_\alpha\tilde{x}
\end{array} \right)\right\Vert^2\\
&= \frac{\mathfrak{t}[\tilde{x}] + \langle B^*\tilde{x},K_\alpha\tilde{x}\rangle + \langle K_\alpha\tilde{x},B^*\tilde{x}\rangle + \langle CK_\alpha\tilde{x},K_\alpha\tilde{x}\rangle}{\Vert \tilde{x}\Vert^2 + \Vert K_\alpha\tilde{x}\Vert^2}\\
&\le \max_{x\in\mathcal{L}\backslash\{0\}}\max_{y\in\mathcal{H}_2}\bigg\{\frac{\mathfrak{t}[x] + \langle B^*x,y\rangle + \langle y,B^*x\rangle + \langle Cy,y\rangle}{\Vert x\Vert^2 + \Vert y\Vert^2}\bigg\}\\&=\lambda_n,
\end{align*}
however, $\alpha>\lambda_n$, and from the contradiction we deduce that $\mathcal{L}_{[\lambda_{n},\infty)}(\mathcal{M})$ is also the graph of an operator. This operator is a finite-dimensional extension of $K_\alpha$, and therefore is also closed and bounded. We repeat this argument for each of the eigenvalues $\lambda_1,\dots,\lambda_n$  which lie in the interval $(c,\alpha)$ and deduce that $K_\alpha$ has a finite-dimensional extension to a closed bounded operator $K_c$ such that \eqref{angularoperator} holds. It follows that $\Dom(K_c)$ is closed, and therefore $\codim(\Dom(K_c))$ is equal to $\dim(\Dom(K_c)^\perp)$; see \cite[Lemma III.1.40]{katopert}. For proof of the existence of a $\tilde{c}$ with the stated properties see \cite[Theorem 3.1]{math}. Using \cite[Lemma 3.4]{math} we have for any $x\in\mathcal{L}_{(-\infty,0)}(S(\tilde{c}))$ and for any $y\in\mathcal{H}_2\backslash\{0\}$
\begin{equation}\label{k}
\mathfrak{a}\left[\left(
\begin{array}{c}
x\\
y
\end{array} \right)\right] =  \mathfrak{t}[x]  + \langle B^*x,y\rangle + \langle y,B^*x\rangle + \langle Cy,y\rangle \le \tilde{c}\left\Vert\left(
\begin{array}{c}
x\\
y
\end{array} \right)\right\Vert^2.
\end{equation}
If $\codim(\Dom(K_c))<\dim(\mathcal{L}_{(-\infty,0)}(S(\tilde{c})))$, then there exists an $x\in\mathcal{L}_{(-\infty,0)}(S(\tilde{c}))$ such that $x\perp\Dom(K_c)^\perp$, that is, $x\in\Dom(K_c)$. Therefore $(x,K_cx)^T\in\Dom(\mathfrak{a})$, $(x,K_cx)^T\in\mathcal{L}_{(c,\infty)}(\mathcal{M})$, and
\begin{displaymath}
\tilde{c}\left\Vert\left(
\begin{array}{c}
x\\
K_cx
\end{array} \right)\right\Vert^2 < \mathfrak{a}\left[\left(
\begin{array}{c}
x\\
K_cx
\end{array} \right)\right] =  \mathfrak{t}[x]  + \langle B^*x,K_cx\rangle + \langle K_cx,B^*x\rangle + \langle CK_cx,K_cx\rangle.
\end{displaymath}
However, if $0\ne y_n\to K_cx$ we have
\begin{displaymath}
\mathfrak{a}\left[\left(
\begin{array}{c}
x\\
y_n
\end{array} \right)\right] \to \mathfrak{a}\left[\left(
\begin{array}{c}
x\\
K_cx
\end{array} \right)\right] > \tilde{c}\left\Vert\left(
\begin{array}{c}
x\\
K_cx
\end{array} \right)\right\Vert^2,
\end{displaymath}
contradicting \eqref{k}, from which we deduce that $\codim(\Dom(K_c)) \ge \kappa$. Let $x\perp\Dom(K_c)$, then $(x,0)^T\perp\mathcal{L}_{(c,\infty)}(\mathcal{M})$, and using \eqref{resolvent}
\begin{displaymath}
0 > \bigg\langle(\mathcal{M}- \tilde{c}I)^{-1}\left(
\begin{array}{c}
x\\
0
\end{array} \right),
\left(
\begin{array}{c}
x\\
0
\end{array} \right)\bigg\rangle =\langle S(\tilde{c})^{-1}x,x\rangle,
\end{displaymath}
from which we deduce that $\codim(\Dom(K_c))\le\kappa$.
\end{proof}

\begin{thm}\label{compact}
Let $\mathcal{M}$ be a top-dominant or diagonally-dominant operator matrix with $A$ unbounded and
$C\in\mathcal{B}(\mathcal{H}_2)$. If there exists an $\alpha\in(c,\infty)\cap\rho(\mathcal{M})$ such that
$a/(\alpha - c) < 1/2$, then for sufficiently large $\beta>c$ there exists a closed bounded operator $K_\beta$ which satisfies \eqref{angularoperator}.
\end{thm}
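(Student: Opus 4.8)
The plan is to reduce the statement to Theorem \ref{lem} together with a simple restriction argument, after first observing that only the \emph{boundedness} of the angular operator is really at issue. Indeed, since $C\in\mathcal{B}(\mathcal{H}_2)$ the closed form $\mathfrak{a}$ of $\mathcal{M}$ has domain $\Dom(|A|^{\frac12})\times\mathcal{H}_2$, and by \eqref{quadform} we have $\mathfrak{a}[(0,y)^{T}]=\langle Cy,y\rangle\le c\|y\|^{2}$; hence if $(0,y)^{T}\in\mathcal{L}_{(\beta,\infty)}(\mathcal{M})$ then $\beta\|y\|^{2}\le\mathfrak{a}[(0,y)^{T}]\le c\|y\|^{2}$, and since $\beta>c$ this forces $y=0$. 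So for every $\beta>c$ the closed subspace $\mathcal{L}_{(\beta,\infty)}(\mathcal{M})$ is the graph of a closed (a priori possibly unbounded) operator $K_\beta$, and the content of the theorem is that $K_\beta$ is bounded once $\beta$ is large enough.

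Next I would establish a restriction principle: if for some $\beta_{0}>c$ the operator $K_{\beta_{0}}$ is bounded, then $K_{\beta}$ is bounded for every $\beta\ge\beta_{0}$. This holds because $\mathcal{L}_{(\beta,\infty)}(\mathcal{M})\subseteq\mathcal{L}_{(\beta_{0},\infty)}(\mathcal{M})$, so $\mathcal{L}_{(\beta,\infty)}(\mathcal{M})$ is the graph of the restriction of $K_{\beta_{0}}$ to $\mathcal{D}_{\beta}:=\{x:(x,K_{\beta_{0}}x)^{T}\in\mathcal{L}_{(\beta,\infty)}(\mathcal{M})\}$; the map $x\mapsto(x,K_{\beta_{0}}x)^{T}$ is a homeomorphism of $\Dom(K_{\beta_{0}})$ onto the closed subspace $\mathcal{L}_{(\beta_{0},\infty)}(\mathcal{M})$, so $\mathcal{D}_{\beta}$ is closed and $K_{\beta}=K_{\beta_{0}}|_{\mathcal{D}_{\beta}}$ is closed and bounded with $\|K_{\beta}\|\le\|K_{\beta_{0}}\|$. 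Thus it suffices to exhibit one admissible $\beta_{0}$.

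For that I would apply Theorem \ref{lem}, which requires $\beta_{0}\in\rho(\mathcal{M})\cap\rho(A)$ with $\delta(\beta_{0})=\frac{a}{\beta_{0}-c}+\frac{|a\beta_{0}+b|}{\dist[\beta_{0},\sigma(A)](\beta_{0}-c)}<\frac12$. Here both hypotheses come in: since $A$ is bounded below and unbounded, $\sigma(A)$ is unbounded above and $A$ has spectral gaps arbitrarily far out; taking $\beta_{0}$ well inside such a gap (so that $\dist[\beta_{0},\sigma(A)]$ is comparable to the width of the gap) and large, the term $\frac{a}{\beta_{0}-c}$ is small and $a\beta_{0}+b\ge0$ by \eqref{domcons}, while by the enclosure of Theorem \ref{thm0} (equivalently Corollary \ref{c1a}) a point sufficiently far from $\sigma(A)$ automatically lies in $\rho(\mathcal{M})$; the hypothesis $a/(\alpha-c)<\frac12$ is what makes the relevant scale available. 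With $\beta_{0}$ so chosen, Theorem \ref{lem} produces a closed bounded $K_{\beta_{0}}$, and the restriction principle then yields the assertion for all $\beta\ge\beta_{0}$.

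I expect the last step to be the crux. For large $\beta_{0}$ the second term of $\delta(\beta_{0})$ is of order $a/\dist[\beta_{0},\sigma(A)]$, so one must secure spectral gaps of $A$ far out that are sufficiently wide relative to the form bound $a$; this is precisely the place where the unboundedness of $A$, the hypothesis $a/(\alpha-c)<\frac12$, and the spectral enclosures of Section \ref{enclosures} have to be combined carefully. It is immediate in the model situation of the preceding remark, where $A$ has compact resolvent and $|\mu_{j+1}-\mu_{j}|\to\infty$, in which case the midpoints of the widening gaps of $\sigma(A)$ furnish an unbounded sequence of admissible parameters $\beta_{0}$.
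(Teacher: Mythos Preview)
Your first two steps are sound and in fact neater than what the paper does: the observation that $(0,y)^{T}\in\mathcal{L}_{(\beta,\infty)}(\mathcal{M})$ forces $y=0$ via $\mathfrak{a}[(0,y)^{T}]=\langle Cy,y\rangle\le c\|y\|^{2}$ is a clean way to get the graph property for every $\beta>c$, and the restriction principle is correct.

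The genuine gap is in your third step. You write that ``since $A$ is bounded below and unbounded, $\sigma(A)$ is unbounded above and $A$ has spectral gaps arbitrarily far out''. The second clause is simply false: $A$ may have $\sigma(A)=[\mu_{0},\infty)$ (e.g.\ $A=-\Delta$ on $L^{2}(\mathbb{R}^{n})$), in which case $\rho(A)\cap(c,\infty)$ is empty and Theorem~\ref{lem} can never be invoked for any $\beta_{0}$. Even when gaps exist, nothing in the hypotheses forces them to be wide enough to make $\delta(\beta_{0})<\tfrac12$: the second term of $\delta(\beta_{0})$ behaves like $a/\dist[\beta_{0},\sigma(A)]$, so you need gaps of width exceeding roughly $2a$, and this is not implied by the unboundedness of $A$ or by $a/(\alpha-c)<\tfrac12$. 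Your own final paragraph acknowledges that this step is only a hope; the acknowledgment is honest, but the hope cannot be realised in general.

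The paper circumvents this obstruction by \emph{creating} a gap artificially. It replaces $A$ by $A+tE((-\infty,\mu))$ with $t=\mu-\min\sigma(A)$ and $\mu\in\sigma(A)$ chosen large enough that $\frac{a}{\alpha-c}+\frac{|a\alpha+b|}{(\mu-\alpha)(\alpha-c)}<\tfrac12$ (possible because $a/(\alpha-c)<\tfrac12$ and $A$ is unbounded). The modified diagonal entry satisfies $A+tE((-\infty,\mu))\ge\mu I$, so $\alpha$ now lies in its resolvent set with distance at least $\mu-\alpha$, and Theorem~\ref{lem} applies to the perturbed matrix $\tilde{\mathcal{M}}$, giving a bounded $\tilde{K}_{c}$. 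The return to $\mathcal{M}$ is then made via a resolvent-difference contour integral: for $\phi\in\mathcal{L}_{(\beta,\infty)}(\mathcal{M})$ one estimates $\|\tilde{\mathcal{E}}((-\infty,\alpha))\phi\|$ using $(\tilde{\mathcal{M}}-\zeta I)^{-1}-(\mathcal{M}-\zeta I)^{-1}=(\mathcal{M}-\zeta I)^{-1}t\mathcal{P}(\tilde{\mathcal{M}}-\zeta I)^{-1}$ and the bound $\|(\mathcal{M}-\zeta I)^{-1}\phi\|\le(\beta-\alpha)^{-1}$, which tends to $0$ as $\beta\to\infty$. This forces $\mathcal{L}_{(\beta,\infty)}(\mathcal{M})$ to be close to the graph of $\tilde{K}_{c}$, yielding boundedness of $K_{\beta}$. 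This perturbation--transfer mechanism is the missing ingredient in your proposal.
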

\begin{proof}
Choose $\mu\in\sigma(A)\cap(\alpha,\infty)$, such that
\begin{equation}\label{condi}
\frac{a}{\alpha - c} + \frac{\vert a\alpha + b\vert}{(\mu - \alpha)(\alpha - c)}
< \frac{1}{2}.
\end{equation}
Let E be the spectral measure associated to $A$. We consider the operator matrix
\begin{equation}\label{family2}
\tilde{\mathcal{M}}_0 := \left(
\begin{array}{cc}
A + tE((-\infty,\mu)) & B\\
B^* & C
\end{array} \right)\quad\textrm{where}\quad t=\mu-\min\sigma(A).
\end{equation}
Let $\tilde{\mathcal{M}}$ be the self-adjoint closure of \eqref{family2}. We have $A + tE((-\infty,\mu))\ge\mu I > \alpha I > cI$. It follows that the Schur complement associated to $\tilde{\mathcal{M}}$ is strictly positive on $(c,\mu)$, therefore $(c,\alpha]\subset\rho(\tilde{\mathcal{M}})$. Also note that $\min\sigma(\mathcal{M})\le\min\sigma({\tilde{\mathcal{M}}})$, and
\begin{displaymath}
\tilde{\mathcal{M}} = \mathcal{M} + t\mathcal{P},\quad\textrm{where}\quad \mathcal{P} =
\left(
\begin{array}{cc}
E((-\infty,\mu)) & 0\\
0 & 0
\end{array} \right)\quad\textrm{and}\quad\Vert\mathcal{P}\Vert = 1.
\end{displaymath}
Denote by $\tilde{\mathfrak{t}}$ the form associated to the operator $A + tE((-\infty,\mu))$, then it is clear that $\Dom(\tilde{\mathfrak{t}}) = \Dom(\mathfrak{t}) = \Dom(\vert A\vert^{\frac{1}{2}})$. Moreover, for all $x\in\Dom(\vert A\vert^{\frac{1}{2}})$ we have
\begin{displaymath}
\Vert B^*x\Vert^2 \le a\mathfrak{t}[x] + b\Vert x\Vert^2 \le a\tilde{\mathfrak{t}}[x] + b\Vert x\Vert^2,
\end{displaymath}
that is, the constants $a,b\in\mathbb{R}$ which satisfy \eqref{domcons}, also satisfy \eqref{domcons} when we replace $A$ with $A + tE((-\infty,\mu))$. We have $A + tE((-\infty,\mu))\ge\mu I$, therefore $\dist[\alpha,\sigma(A + tE((-\infty,\mu)))] \ge \mu - \alpha$, and we obtain
\begin{displaymath}
\frac{a}{\alpha - c} + \frac{\vert a\alpha + b\vert}{\dist[\alpha,\sigma(A + tE((-\infty,\mu)))](\alpha - c)}\le\frac{a}{\alpha - c} + \frac{\vert a\alpha + b\vert}{(\mu-\alpha)(\alpha - c)}
< \frac{1}{2}.
\end{displaymath}
Using Theorem \ref{lem} we deduce that there exists a closed bounded operator $\tilde{K}_\alpha$ which satisfies \eqref{angularoperator} for the subspace $\mathcal{L}_{(\alpha,\infty)}(\tilde{\mathcal{M}})$. Since $(c,\alpha]\subset\rho(\tilde{\mathcal{M}})$, we have $\mathcal{L}_{(c,\infty)}(\tilde{\mathcal{M}}) = \mathcal{L}_{(\alpha,\infty)}(\tilde{\mathcal{M}})$, that is, we have a closed bounded operator $\tilde{K}_c$ which satisfies \eqref{angularoperator} for the subspace $\mathcal{L}_{(c,\infty)}(\tilde{\mathcal{M}})$.

Let $\mathcal{E}$ and $\tilde{\mathcal{E}}$ be the spectral measures associated to $\mathcal{M}$ and
$\tilde{\mathcal{M}}$ respectively. Let $\beta>\alpha$, and $\Gamma$ be a circle which passes through $\alpha$ and any $\tilde{\alpha}<\min\sigma(\mathcal{M})$. For any normalised $\phi\in\mathcal{L}_{(\beta,\infty)}(\mathcal{M})$ we have
\begin{align*}
\Vert\tilde{\mathcal{E}}((-\infty,\alpha))\phi\Vert^2 &= \langle \tilde{\mathcal{E}}((-\infty,\alpha))\phi,\tilde{\mathcal{E}}((-\infty,\alpha))\phi\rangle\\
&= \langle \tilde{\mathcal{E}}((-\infty,\alpha))\phi,\phi\rangle\\
&= \langle [\tilde{\mathcal{E}}((-\infty,\alpha)) - \mathcal{E}((-\infty,\alpha))]\phi,\phi\rangle\\
&= -\frac{1}{2i\pi}\int_\Gamma \langle[(\tilde{\mathcal{M}}-\zeta I)^{-1} -
(\mathcal{M}-\zeta I)^{-1}]\phi,\phi\rangle~d\zeta.
\end{align*}
Since $(\tilde{\mathcal{M}}-\zeta I)^{-1} -
(\mathcal{M}-\zeta)^{-1} = (\mathcal{M}-\zeta I)^{-1}t\mathcal{P}(\tilde{\mathcal{M}}-\zeta I)^{-1}$ we have
\begin{align*}
\Vert\tilde{\mathcal{E}}(-\infty,\alpha))\phi\Vert^2
&=-\frac{1}{2i\pi}\int_\Gamma
\langle[(\mathcal{M}-\zeta I)^{-1}t\mathcal{P}(\tilde{\mathcal{M}}-\zeta I)^{-1}\phi,\phi\rangle~d\zeta\\
&\le\frac{1}{2\pi}\int_\Gamma
\vert\langle[(\mathcal{M}-\zeta I)^{-1}t\mathcal{P}(\tilde{\mathcal{M}}-\zeta I)^{-1}\phi,\phi\rangle\vert~d\zeta\\
&=\frac{1}{2\pi}\int_\Gamma
\vert\langle t\mathcal{P}(\tilde{\mathcal{M}}-\zeta I)^{-1}\phi,(\mathcal{M}-\overline{\zeta}I)^{-1}\phi\rangle\vert~d\zeta\\
&\le\frac{t(\alpha-\tilde{\alpha})}{2}\max_{z\in\Gamma}\Vert(\tilde{\mathcal{M}}-zI)^{-1}\Vert\max_{w\in\Gamma}\Vert
(\mathcal{M}-wI)^{-1}\phi\Vert.
\end{align*}
Let $M = \max_{z\in\Gamma}\Vert(\tilde{\mathcal{M}}-zI)^{-1}\Vert$, and note that
\begin{align*}
\max_{w\in\Gamma}\Vert(\mathcal{M}-wI)^{-1}\phi\Vert &= \max_{w\in\Gamma}\Big(\int_\mathbb{R}\frac{1}{\vert\lambda - w\vert^2}~d\langle \mathcal{E}_\lambda\phi,\phi\rangle\Big)^{\frac{1}{2}}\\
&= \max_{w\in\Gamma}\Big(\int_\beta^\infty\frac{1}{\vert\lambda - w\vert^2}~d\langle \mathcal{E}_\lambda\phi,\phi\rangle\Big)^{\frac{1}{2}}\\
&\le\frac{1}{\beta - \alpha}.
\end{align*}
We obtain
\begin{displaymath}
\Vert\tilde{\mathcal{E}}(-\infty,\alpha))\phi\Vert\le\sqrt{\frac{t(\alpha-\tilde{\alpha}) M}{2(\beta-\alpha)}}.
\end{displaymath}
Let $0<\varepsilon < 1$. For sufficiently large $\beta\in\mathbb{R}$ and any $\phi\in\mathcal{L}_{(\beta,\infty)}(\mathcal{M})$ with $\Vert\phi\Vert=1$, we have $\Vert\tilde{\mathcal{E}}(-\infty,\alpha))\phi\Vert\le\varepsilon$. Suppose that $\phi$ is of the form
\begin{displaymath}
\phi = \left(
\begin{array}{c}
0\\
y
\end{array} \right)\quad\textrm{and therefore}\quad \Vert y\Vert=1.
\end{displaymath}
We have
\begin{displaymath}
\tilde{\mathcal{E}}((\alpha,\infty))\phi = \left(
\begin{array}{c}
x\\
\tilde{K}_cx
\end{array} \right)\quad\textrm{for some}\quad x\in\mathcal{H}_1,
\end{displaymath}
and
\begin{align*}
\bigg\Vert\left(
\begin{array}{c}
0\\
y
\end{array} \right) - \left(
\begin{array}{c}
x\\
\tilde{K}_cx
\end{array} \right)\bigg\Vert
&= \Vert[I - \tilde{\mathcal{E}}((\alpha,\infty))]\phi\Vert\\
&= \Vert\tilde{\mathcal{E}}((-\infty,\alpha))\phi\Vert\\
&\le\varepsilon,
\end{align*}
from which we obtain $\Vert x\Vert\le\varepsilon$ and $1 - \Vert\tilde{K}_cx\Vert = \Vert y\Vert - \Vert\tilde{K}_cx\Vert \le \Vert y -\tilde{K}_cx\Vert \le \varepsilon$, therefore $\Vert\tilde{K}_c\Vert\ge(1-\varepsilon)/\varepsilon$.
However, we may choose $\varepsilon>0$ to be arbitrarily small. We deduce that for sufficiently large $\beta\in\mathbb{R}$ there exists a closed operator $K_\beta$ which satisfies \eqref{angularoperator} for the subspace $\mathcal{L}_{(\beta,\infty)}(\mathcal{M})$. A similar argument shows that $K_\beta$ is bounded.
\end{proof}

If $\dim(\mathcal{L}_{(-\infty,0)}(S(\gamma)))<\infty$ for some $\gamma\in(c,\infty)$, and we can choose $\beta$ in Theorem \ref{compact} with $(c,\beta)\cap\sigma_{\ess}(\mathcal{M})=\emptyset$, then by Corollary \ref{c3} we can extend the operator $K_\beta$ to an operator $K_c$. In particular, we have the following corollary.

\begin{cor}\label{compactresolvent}
Let $\mathcal{M}$ be a top-dominant or diagonally-dominant operator matrix where $A$ has compact resolvent and
$C\in\mathcal{B}(\mathcal{H}_2)$. There exists a $\tilde{c}\in\mathbb{R}$ with $(c,\tilde{c}]\subset\rho(\mathcal{M})$, and a closed bounded operator $K_c$ which satisfies \eqref{angularoperator}, moreover, $\codim(\Dom(K_c))= \kappa := \dim(\mathcal{L}_{(-\infty,0)}(S(\tilde{c})))<\infty$.
\end{cor}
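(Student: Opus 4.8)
The plan is to obtain this corollary exactly as indicated in the paragraph immediately preceding it: one checks that the hypotheses of Theorem \ref{compact} and of Corollary \ref{c3} are automatically satisfied once $A$ has compact resolvent, and then invokes those two results in turn. We may assume $A$ is unbounded, the finite-dimensional case being elementary.

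First recall, from the discussion following Corollary \ref{c1} and \cite[Theorem 4.5]{math0}, that when $A$ has compact resolvent and $C\in\mathcal{B}(\mathcal{H}_2)$ one has $\sigma_{\ess}(\mathcal{M})\cap(c,\infty)=\emptyset$, the set $\sigma(\mathcal{M})\cap(c,\infty)$ is a sequence of eigenvalues of finite multiplicity tending to $+\infty$, and $\dim(\mathcal{L}_{(-\infty,0)}(S(\gamma)))<\infty$ for every $\gamma\in(c,\infty)$. In particular $\rho(\mathcal{M})\cap(c,\infty)$ is unbounded, so we may fix $\alpha\in\rho(\mathcal{M})\cap(c,\infty)$ with $\alpha-c>2a$, whence $a/(\alpha-c)<1/2$. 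Thus the hypothesis of Theorem \ref{compact} holds, and Theorem \ref{compact} provides, for all sufficiently large $\beta>c$, a closed bounded operator $K_\beta$ satisfying \eqref{angularoperator} for $\mathcal{L}_{(\beta,\infty)}(\mathcal{M})$; since the eigenvalues of $\mathcal{M}$ in $(c,\infty)$ are isolated we may in addition arrange $\beta\in\rho(\mathcal{M})$.

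It then remains to apply Corollary \ref{c3} with this $\beta$ playing the role of $\alpha$. Its hypotheses are met: $C\in\mathcal{B}(\mathcal{H}_2)$ by assumption; $K_\beta$ is the required bounded angular operator with $\beta\in\rho(\mathcal{M})\cap(c,\infty)$; $\dim(\mathcal{L}_{(-\infty,0)}(S(\gamma)))<\infty$ for some (indeed every) $\gamma\in(c,\infty)$; and $(c,\beta)\cap\sigma_{\ess}(S)=\emptyset$. For the last point one uses that $(c,\infty)\subset\rho(C)$, that on $\{z:\Re z>c\}$ the spectra of $S$ and $\mathcal{M}$ coincide, and that, via the resolvent identity \eqref{resolvent} (equivalently the Schur factorisation), a point of $(c,\infty)$ is an isolated eigenvalue of finite multiplicity of $\mathcal{M}$ precisely when it is such for $S$; since $\sigma(\mathcal{M})\cap(c,\infty)$ consists only of such points, the same holds for $\sigma(S)\cap(c,\infty)$. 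Corollary \ref{c3} now produces $\tilde{c}\in\mathbb{R}$ with $(c,\tilde{c}]\subset\rho(\mathcal{M})$, a closed bounded operator $K_c$ satisfying \eqref{angularoperator}, and $\codim(\Dom(K_c))=\dim(\mathcal{L}_{(-\infty,0)}(S(\tilde{c})))=:\kappa<\infty$, which is precisely the assertion.

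The only step that is not a purely formal bookkeeping check is establishing $(c,\beta)\cap\sigma_{\ess}(S)=\emptyset$, i.e.\ transferring the discreteness of $\sigma(\mathcal{M})\cap(c,\infty)$ to the Schur complement $S$ on the half-line; this is where the Schur factorisation and the coincidence of $\sigma(S)$ and $\sigma(\mathcal{M})$ on $\{\Re z>c\}$ are used. Everything else amounts to observing that the quantitative conditions required by Theorem \ref{compact} and Corollary \ref{c3} are automatic consequences of $A$ having compact resolvent.
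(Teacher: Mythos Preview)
Your proof is correct and follows essentially the same approach as the paper's own proof, which simply cites \cite[Theorem 4.5]{math0} for $\sigma_{\ess}(\mathcal{M})\cap(c,\infty)=\emptyset$ and the accumulation of eigenvalues at infinity, and then invokes Theorem \ref{compact} and Corollary \ref{c3}. Your version is more explicit in verifying the hypotheses, in particular in arranging $\beta\in\rho(\mathcal{M})$ and in addressing the condition $(c,\beta)\cap\sigma_{\ess}(S)=\emptyset$ (stated in Corollary \ref{c3} for $S$ rather than $\mathcal{M}$), which the paper leaves implicit.
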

\begin{proof}
For a diagonally-dominant or top-dominant matrix with $C\in\mathcal{B}(\mathcal{H}_2)$
and $A$ having compact resolvent, we have $\sigma_{\ess}(\mathcal{M})\cap(c,\infty)=\emptyset$; see \cite[Theorem 4.5]{math0}. The
interval $(c,\infty)$ contains a sequence of eigenvalues of finite multiplicity which
accumulate at infinity. The result follows from Theorem \ref{compact} and Corollary \ref{c3}.
\end{proof}

The top-dominant case with $C\in\mathcal{B}(\mathcal{H}_2)$ has also been considered in \cite{sha}. We now demonstrate that our hypothesis includes operator matrices which cannot satisfy the hypothesis considered in \cite{sha}. The authors of \cite{sha} show that if there exists an $\alpha<\min\sigma(A)$ such that $C - \alpha I - \overline{B^*(A - \alpha I)^{-1}B} \ll 0$, then $\mathcal{L}_{(\alpha,\infty)}(\mathcal{M})$ is a graph invariant subspace, and the corresponding angular operator belongs to $\mathcal{B}(\mathcal{H}_1,\mathcal{H}_2)$; see \cite[Theorem 2.5]{sha}.
We suppose that the hypothesis of Corollary \ref{compactresolvent} is satisfied with $\kappa\ne 0$, and that $\min\sigma(A) < \max\sigma_{\ess}(\mathcal{M})$. Suppose also that the hypothesis of \cite[Theorem 2.5]{sha} is satisfied, that is, there exists an $\alpha<\min\sigma(A)$ with $C - \alpha I - \overline{B^*(A - \alpha I)^{-1}B} \ll 0$. By \cite[Theorem 2.5]{sha} this implies the existence of an operator $K_\alpha$ satisfying \eqref{angularoperator}. Note that
\begin{displaymath}
\mathcal{L}_{(\alpha,\infty)}(\mathcal{M}) = \mathcal{L}_{(\alpha,c]}(\mathcal{M}) \oplus \mathcal{L}_{(c,\infty)}(\mathcal{M}).
\end{displaymath}
The operator $K_c$ implied by Corollary \ref{compactresolvent} has $\codim(\Dom(K_c))= \kappa <\infty$, and is a restriction of the operator $K_\alpha$. Since $\alpha < \min\sigma(A) < \max\sigma_{\ess}(\mathcal{M}) \le c$, we have $\dim(\mathcal{L}_{(\alpha,c]}(\mathcal{M})) = \infty$. Let
\begin{displaymath}
\left(
\begin{array}{c}
x_1\\
K_\alpha x_1
\end{array} \right),\dots,
\left(
\begin{array}{c}
x_{\kappa+1}\\
K_\alpha x_{\kappa+1}
\end{array} \right)\in\mathcal{L}_{(\alpha,c]}(\mathcal{M}),
\end{displaymath}
and
\begin{displaymath}
\left(
\begin{array}{c}
x_i\\
K_\alpha x_i
\end{array} \right)\perp
\left(
\begin{array}{c}
x_j\\
K_\alpha x_j
\end{array} \right)\quad\textrm{for}\quad i\ne j.
\end{displaymath}
The $\{x_1,\dots,x_{\kappa+1}\}$ are a linearly independent set, but since $\codim(\Dom(K_c))= \kappa$ we have $0\ne x = a_1x_1+\dots+a_{\kappa+1}x_{\kappa+1}\in\Dom(K_c)$ for some choice of $a_1,\dots,a_{\kappa+1}$, and therefore
\begin{displaymath}
\left(
\begin{array}{c}
x\\
K_\alpha x
\end{array} \right)\in\mathcal{L}_{(\alpha,c]}(\mathcal{M})\quad\textrm{and}\quad
\left(
\begin{array}{c}
x\\
K_\alpha x
\end{array} \right) =
\left(
\begin{array}{c}
x\\
K_c x
\end{array} \right)\in\mathcal{L}_{(c,\infty)}(\mathcal{M}),
\end{displaymath}
a contradiction.

\section{Basis properties}

In this section we derive basis properties for the first components of the eigenvectors of $\mathcal{M}$ for which the corresponding eigenvalues lie in the interval $(c,\infty)$.

\begin{cor}\label{c4}
Let $\mathcal{M}$ be a top-dominant or diagonally-dominant operator matrix and let
$C\in\mathcal{B}(\mathcal{H}_2)$. If $A$ has compact resolvent, and $\{(x_n,K_cx_n)^T\}_{n=1}^\infty$ are orthonormal eigenvectors corresponding to the eigenvalues of $\mathcal{M}$ which are greater than $c$, then the set $\{x_n\}_{n=1}^\infty$ forms a Riesz
basis for $\Dom(K_c)$.
\end{cor}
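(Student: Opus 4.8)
The plan is to use Corollary~\ref{compactresolvent} to realise $\mathcal{L}_{(c,\infty)}(\mathcal{M})$ as the graph of the bounded operator $K_c$, whose domain is a \emph{closed} subspace of $\mathcal{H}_1$, and then to observe that the associated linear isomorphism between $\Dom(K_c)$ and $\mathcal{L}_{(c,\infty)}(\mathcal{M})$ transports the orthonormal basis of eigenvectors to a Riesz basis of $\Dom(K_c)$.

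First I would assemble the structural facts. Since $A$ has compact resolvent and $C\in\mathcal{B}(\mathcal{H}_2)$, Corollary~\ref{compactresolvent} supplies a $\tilde c$ with $(c,\tilde c]\subset\rho(\mathcal{M})$ and a closed bounded operator $K_c$ satisfying \eqref{angularoperator}; being closed and bounded, $K_c$ has closed domain, so $\Dom(K_c)$ is a Hilbert space in the $\mathcal{H}_1$-inner product. By \cite[Theorem 4.5]{math0} we have $\sigma_{\ess}(\mathcal{M})\cap(c,\infty)=\emptyset$, hence $\sigma(\mathcal{M})\cap(c,\infty)$ is a sequence of isolated eigenvalues of finite multiplicity tending to $+\infty$, and, $\mathcal{M}$ being self-adjoint, $\mathcal{L}_{(c,\infty)}(\mathcal{M})$ is the orthogonal direct sum of the associated finite-dimensional eigenspaces. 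Choosing an orthonormal basis inside each eigenspace therefore yields an orthonormal basis $\{(x_n,K_cx_n)^T\}_{n=1}^\infty$ of $\mathcal{L}_{(c,\infty)}(\mathcal{M})$ of the form described (each such eigenvector lies in $\mathcal{L}_{(c,\infty)}(\mathcal{M})$, hence is the graph of its first component under $K_c$); this is exactly the family in the statement.

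Next I would consider the map $\Phi:\Dom(K_c)\to\mathcal{L}_{(c,\infty)}(\mathcal{M})$ given by $\Phi x=(x,K_cx)^T$. It is linear and, by \eqref{angularoperator}, a bijection onto the closed subspace $\mathcal{L}_{(c,\infty)}(\mathcal{M})$, and the two-sided estimate
\begin{displaymath}
\Vert x\Vert^2\le\Vert\Phi x\Vert^2=\Vert x\Vert^2+\Vert K_cx\Vert^2\le\big(1+\Vert K_c\Vert^2\big)\Vert x\Vert^2
\end{displaymath}
shows that $\Phi$ and $\Phi^{-1}$ are bounded, i.e. $\Phi$ is a linear homeomorphism of Hilbert spaces. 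Since $x_n=\Phi^{-1}\big((x_n,K_cx_n)^T\big)$ and $\{(x_n,K_cx_n)^T\}_{n=1}^\infty$ is an orthonormal basis of $\mathcal{L}_{(c,\infty)}(\mathcal{M})$, the set $\{x_n\}_{n=1}^\infty$ is the image of an orthonormal basis under a bounded invertible operator with bounded inverse, and is therefore a Riesz basis for $\Dom(K_c)$.

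This corollary is mostly a matter of packaging the preceding results; the only step that needs attention is the claim that $\Dom(K_c)$ is complete in the $\mathcal{H}_1$-norm, which is precisely where the closedness of the bounded operator $K_c$ from Corollary~\ref{compactresolvent} is used, together with the absence of essential spectrum above $c$ ensuring that the eigenvectors genuinely span $\mathcal{L}_{(c,\infty)}(\mathcal{M})$.
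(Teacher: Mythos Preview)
Your proof is correct and follows essentially the same approach as the paper: both exploit the graph map $x\mapsto(x,K_cx)^T$ from $\Dom(K_c)$ onto $\mathcal{L}_{(c,\infty)}(\mathcal{M})$ together with the two-sided bound $\Vert x\Vert^2\le\Vert x\Vert^2+\Vert K_cx\Vert^2\le(1+\Vert K_c\Vert^2)\Vert x\Vert^2$. The only cosmetic difference is that the paper writes out the Riesz inequalities for the coefficients explicitly, whereas you invoke the equivalent characterisation of a Riesz basis as the image of an orthonormal basis under a bounded invertible operator.
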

\begin{proof}
The $\{(x_n,K_cx_n)^T\}_{n=1}^\infty$ form an orthonormal basis for $\mathcal{L}_{(c,\infty)}(\mathcal{M})$, and for any $x\in\Dom(K_c)$ we therefore have
\begin{displaymath}
\left(
\begin{array}{c}
x\\
K_cx
\end{array} \right) = \sum \beta_n\left(
\begin{array}{c}
x_n\\
K_cx_n
\end{array} \right)\quad\textrm{where}\quad\beta_n
= \bigg\langle\left(
\begin{array}{c}
x\\
K_cx
\end{array} \right),\left(
\begin{array}{c}
x_n\\
K_cx_n
\end{array} \right)\bigg\rangle.
\end{displaymath}
Therefore $x = \sum\beta_nx_n$, and the sequence $\{\beta_n\}_{n=1}^\infty$ is unique
because the vectors $(x_n,K_cx_n)^T$ are a basis for $\mathcal{L}_{(c,\infty)}(\mathcal{M})$. We deduce
that the $\{x_n\}_{n=1}^\infty$ form a basis for $\Dom(K_c)$. It remains to show that the $\{x_n\}_{n=1}^\infty$ form a Riesz sequence. We have
\begin{equation}\label{riesz1}
\Vert x\Vert^2 + \Vert K_cx\Vert^2 =
\Vert\sum\beta_nx_n\Vert^2 + \Vert K_c\sum\beta_nx_n\Vert^2  =
\sum\vert\beta_n\vert^2,
\end{equation}
and therefore
\begin{equation}\label{riesz2}
\sum\vert\beta_n\vert^2 \le (1+\Vert K_c\Vert^2)\Vert\sum\beta_nx_n\Vert^2.
\end{equation}
From \eqref{riesz1} and \eqref{riesz2} we have
\begin{displaymath}
(1+\Vert K_c\Vert^2)^{-1}\sum\vert\beta_n\vert^2 \le \Vert\sum\beta_nx_n\Vert^2 \le \sum\vert\beta_n\vert^2,
\end{displaymath}
thus the $\{x_n\}_{n=1}^\infty$ form a Riesz sequence, and therefore a Riesz basis for $\Dom(K_c)$.
\end{proof}

\begin{prop}\label{thm11}
Let $\mathcal{M}$ be a top-dominant or diagonally-dominant operator matrix. Let $A$ have compact resolvent with eigenvalues $\{\mu_n\}_{n=1}^\infty$, \begin{equation}\label{ass0}
\dist[\mu_n,\sigma(A)\backslash\{\mu_n\}]\to\infty\quad\textrm{as}\quad n\to\infty,
\end{equation}
and let $C\in\mathcal{B}(\mathcal{H}_2)$. Let $\{\lambda_n\}_{n=1}^\infty$ be the eigenvalues of $\mathcal{M}$ in the interval $(c,\infty)$. If $E$ and $F_n$ are the spectral measures corresponding to $A$ and $S(\lambda_n)$ respectively, and $\gamma_n = \dist[\lambda_n,\sigma(\mathcal{M})\backslash\{\lambda_n\}]/2$, then we have
\begin{equation}\label{measures}
\Vert E(\{\mu_{\kappa+n}\}) - F_n(\Delta_n)\Vert \to 0\quad\textrm{as}\quad n\to\infty,\quad\textrm{where}\quad\Delta_n = (-\gamma_n,\gamma_n).
\end{equation}
\end{prop}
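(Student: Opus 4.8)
The plan is to exhibit both $F_n(\Delta_n)$ and $E(\{\mu_{\kappa+n}\})$ as Riesz projections over one and the same contour $\Gamma_n$ encircling the origin with a radius that blows up at a controlled rate, and then to estimate their difference by a resolvent identity. I would first record the structure of the Schur complement. For $\lambda>c$ put $V_\lambda[x]=\langle(\lambda I-C)^{-1}B^*x,B^*x\rangle\ge 0$; since $\dist[\lambda,\sigma(C)]=\lambda-c$, \eqref{domcons} gives $V_\lambda[x]\le\frac{1}{\lambda-c}\Vert B^*x\Vert^2\le\frac{C_0}{\lambda-c}\langle(A-\upsilon I)x,x\rangle$ with $C_0$ independent of $\lambda$. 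As $s_\lambda(x,x)=\langle(A-\lambda I)x,x\rangle+V_\lambda[x]$, the operator $S(\lambda)$ is the form sum of $A-\lambda I$ and $V_\lambda$, and in particular
\[
A-\lambda_n I\ \le\ S(\lambda_n)\ \le\ \Big(1+\tfrac{C_0}{\lambda_n-c}\Big)(A-\upsilon I)-(\lambda_n-\upsilon)I,\qquad \big\Vert V_{\lambda_n}^{1/2}(A-\upsilon I)^{-1/2}\big\Vert=O\big((\lambda_n-c)^{-1/2}\big).
\]

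Next I would localise. By \eqref{maththias}, $\mu_{\kappa+n}\le\lambda_n\le\mu_{\kappa+n}+a+o(1)$, so $|\lambda_n-\mu_{\kappa+n}|=O(1)$, while by \eqref{ass0} the number $d_n:=\dist[\mu_{\kappa+n},\sigma(A)\setminus\{\mu_{\kappa+n}\}]\to\infty$; hence for large $n$ the only point of $\sigma(A)$ within $\tfrac12 d_n$ of $\lambda_n$ is $\mu_{\kappa+n}$. Feeding the two-sided bound above into the min-max principle (with $m_n:=\dim\Ran E(\{\mu_{\kappa+n}\})$) shows that $\sigma(S(\lambda_n))$ has exactly $m_n$ eigenvalues, counted with multiplicity, in a bounded interval $I_n\ni 0$ about $\mu_{\kappa+n}-\lambda_n$ (using $0\in\sigma(S(\lambda_n))$ and $|\lambda_n-\mu_{\kappa+n}|=O(1)$), the rest of $\sigma(S(\lambda_n))$ lying at distance $\ge d_n-O(1)$ from $0$. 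With $r_n=\tfrac14 d_n$ and $\Gamma_n=\{z:|z|=r_n\}$, the circle $\Gamma_n$ then lies in $\rho(S(\lambda_n))$ and in $\rho(A)-\lambda_n$, encircles exactly the cluster $I_n\cap\sigma(S(\lambda_n))$ and exactly the point $\mu_{\kappa+n}-\lambda_n$ of $\sigma(A)-\lambda_n$, and on $\Gamma_n$ the distances to $\sigma(S(\lambda_n))$ and to $\sigma(A)-\lambda_n$ are both $\ge\tfrac14 d_n-O(1)$. Granting — this is the matching step discussed below — that $\Delta_n=(-\gamma_n,\gamma_n)$ contains the cluster $I_n\cap\sigma(S(\lambda_n))$ and no further spectrum, the Riesz formulae give $E(\{\mu_{\kappa+n}\})=-\frac{1}{2\pi i}\oint_{\Gamma_n}(A-\lambda_n I-z)^{-1}\,dz$ and $F_n(\Delta_n)=-\frac{1}{2\pi i}\oint_{\Gamma_n}(S(\lambda_n)-z)^{-1}\,dz$.

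Subtracting and using the (form) resolvent identity $(S(\lambda_n)-z)^{-1}-(A-\lambda_n I-z)^{-1}=-(S(\lambda_n)-z)^{-1}V_{\lambda_n}(A-\lambda_n I-z)^{-1}$, one gets
\[
F_n(\Delta_n)-E(\{\mu_{\kappa+n}\})=\frac{1}{2\pi i}\oint_{\Gamma_n}\big[(S(\lambda_n)-z)^{-1}V_{\lambda_n}^{1/2}\big]\,\big[V_{\lambda_n}^{1/2}(A-\lambda_n I-z)^{-1}\big]\,dz .
\]
Inserting $(A-\upsilon I)^{\pm1/2}$ into each bracket, the first display yields $\Vert V_{\lambda_n}^{1/2}(A-\upsilon I)^{-1/2}\Vert=O(\lambda_n^{-1/2})$, while $\Vert(A-\upsilon I)^{1/2}(A-\lambda_n I-z)^{-1}\Vert$ and — via $A-\upsilon I\le S(\lambda_n)+(\lambda_n-\upsilon)I\le(1+o(1))(A-\upsilon I)$ — also $\Vert(A-\upsilon I)^{1/2}(S(\lambda_n)-z)^{-1}\Vert$ are $O(\sqrt{\lambda_n}/d_n+\lambda_n^{-1/2})$ on $\Gamma_n$; the offending factors $\sqrt{\lambda_n}$ cancel, the integrand is $O(d_n^{-2})$ in norm (smaller still when $d_n\gg\lambda_n$), and since $\mathrm{length}(\Gamma_n)=O(d_n)$ we conclude $\Vert F_n(\Delta_n)-E(\{\mu_{\kappa+n}\})\Vert=O(d_n^{-1})\to 0$, which is \eqref{measures}.

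I expect the matching step to be the crux, since $\gamma_n$ is defined purely through $\sigma(\mathcal M)$. That $\gamma_n<d_n-O(1)$ is immediate, as some eigenvalue of $\mathcal M$ from a neighbouring cluster lies within $d_n+O(1)$ of $\lambda_n$. The delicate point is the reverse: that $(-\gamma_n,\gamma_n)$ is wide enough to swallow the whole cluster $I_n\cap\sigma(S(\lambda_n))$, i.e. that this cluster does not break into pieces separated by more than $2\gamma_n$. Here I would invoke the correspondence $\lambda_n+s\in\sigma(\mathcal M)\iff 0\in\sigma(S(\lambda_n+s))$ together with $S(\lambda_n+s)=S(\lambda_n)-sI+s\,B(C-\lambda_n I-sI)^{-1}(C-\lambda_n I)^{-1}B^*$, whose last term has $(A-\upsilon I)$-relative form bound $O(|s|/\lambda_n^2)$ for bounded $s$: this identifies the points of $I_n\cap\sigma(S(\lambda_n))$, up to relative error $o(1)$, with $\lambda_n^{(j)}-\lambda_n$, where $\lambda_n^{(j)}$ runs over the eigenvalues of $\mathcal M$ in the $\mu_{\kappa+n}$-cluster, and $\Ker S(\lambda_n)$ with the eigenspace of $\lambda_n$, so that for large $n$ the interval $(-\gamma_n,\gamma_n)$ separates this cluster from the rest of $\sigma(S(\lambda_n))$ exactly as needed. (It is instructive to confirm \eqref{measures} at the level of eigenvectors as well: from $K_cx=(\lambda_n I-C)^{-1}B^*x$ and the bound $\mathfrak t[x]\le\lambda_n\Vert x\Vert^2$ for $x\in\Ker S(\lambda_n)$ one gets $\Vert K_cx\Vert\le(\lambda_n-c)^{-1}\sqrt{a\lambda_n+b}\,\Vert x\Vert\to 0$, so the first components of the orthonormal $\lambda_n$-eigenvectors of $\mathcal M$ are asymptotically orthonormal, and using the identity $x+\overline{(A-\upsilon I)^{-1}B}K_cx=(\lambda_n-\upsilon)(A-\upsilon I)^{-1}x$ from \eqref{top1}–\eqref{top2} one shows they are asymptotically supported on $\Ran E(\{\mu_{\kappa+n}\})$; this displays $\Ran F_n(\{0\})$ as asymptotically inside that eigenspace.)
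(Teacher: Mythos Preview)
Your approach is correct in outline but differs from the paper's in two material respects, and one of these differences is precisely what creates the ``matching step'' you identify as the crux.

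First, the contour. You take $\Gamma_n$ centred at $0$ with radius $r_n=\tfrac14 d_n$ (tied to $\sigma(A)$), and must then argue separately that the spectrum of $S(\lambda_n)$ inside your disc coincides with the spectrum in $\Delta_n=(-\gamma_n,\gamma_n)$. The paper instead takes $\Gamma_n$ to be the circle centred at $\lambda_n$ with radius $\gamma_n$ itself. With that choice the contour integral $-\tfrac{1}{2\pi i}\oint_{\Gamma_n}(S(\lambda_n)-(\zeta-\lambda_n)I)^{-1}\,d\zeta$ is \emph{by definition} $F_n(\Delta_n)$, provided only that the circle $\{|w|=\gamma_n\}$ avoids $\sigma(S(\lambda_n))$; this last fact is a by-product of the factorisation below. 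The only thing left to check on the $A$ side is that $\mu_{\kappa+n}$ is the sole eigenvalue of $A$ inside $\Gamma_n$, and this follows from $|\lambda_n-\mu_{\kappa+n}|=O(1)$, $\gamma_n\to\infty$, and $\gamma_n\le\tfrac12 d_n+O(1)$. So the matching step evaporates with the right contour.

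Second, the resolvent estimate. You use the form resolvent identity with $V_{\lambda_n}^{1/2}$ and bound $\Vert(A-\upsilon I)^{1/2}(A-\lambda_n I-z)^{-1}\Vert$ directly. The paper instead reuses the factorisation from the proof of Theorem~\ref{thm0}: for $z\in\Gamma_n$ one writes
\[
S(\lambda_n)-(z-\lambda_n)I=|A-zI|^{1/2}\big((A-zI)|A-zI|^{-1}+G_n(z)\big)|A-zI|^{1/2},
\]
with $\Vert G_n(z)\Vert\le\delta_n(z)$ and $\delta_n:=\max_{z\in\Gamma_n}\delta_n(z)\to 0$; subtracting $(A-zI)^{-1}$ and expanding $(I+G_n|A-z|(A-z)^{-1})^{-1}$ in a Neumann series gives $\Vert E(\{\mu_{\kappa+n}\})-F_n(\Delta_n)\Vert\le M\,\delta_n/(1-\delta_n)$. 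Your bound and the paper's are comparable in strength (both of order $d_n^{-1}$), but the paper's route is shorter because it recycles the machinery already built for Theorem~\ref{thm0} and simultaneously certifies that $\Gamma_n-\lambda_n\subset\rho(S(\lambda_n))$.

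In summary: your argument works, but the paper avoids the delicate part you flagged simply by choosing the contour radius to be $\gamma_n$ rather than a fraction of $d_n$, and it replaces your form-perturbation bookkeeping by the $G_n$-factorisation and a Neumann series.
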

\begin{proof}
We denote by $\Gamma_n$ the circle with centre $\lambda_n$ and radius $\gamma_n$. It follows from \eqref{maththias} that
\begin{equation}\label{ass}
\mu_{\kappa+n} \le \lambda_n \le \mu_{\kappa+n} + a + \frac{ac+b-a^2}{\mu_{\kappa+n}-c} + \mathcal{O}\Big(\frac{1}{\mu_{\kappa_n}^2}\Big);
\end{equation}
see \cite[Corollary 4.4]{math}. From \eqref{ass0} and \eqref{ass} we deduce that $\dist[\lambda_n,\sigma(\mathcal{M})\backslash\{\lambda_n\}]\to\infty$ as $n\to\infty$, therefore, for all sufficiently large $n\in\mathbb{N}$, $\mu_{\kappa + n}$ is the only element from $\sigma(A)$ which lies inside the circle $\Gamma_n$. For $z\in\Gamma_n$, we set
\begin{displaymath}
\delta_n(z) = \frac{a}{\lambda_n-c} + \frac{\vert az + b\vert}{\dist[z,\sigma(A)](\lambda_n-c)},
\end{displaymath}
also from \eqref{ass0} and \eqref{ass} we deduce that
\begin{equation}\label{deltan}
\delta_n:=\max_{z\in\Gamma_n}\delta_n(z)\to 0\quad\textrm{as}\quad n\to\infty.
\end{equation}
Similarly to the proof of Theorem \ref{thm0}, we have for all $x\in \Dom (\vert A\vert^{\frac{1}{2}})$
\begin{displaymath}
\vert\langle(C - \lambda_n I)^{-1}B^*x,B^*x\rangle\vert
\le\delta_n(z)\langle\vert A - zI\vert^\frac{1}{2}x,\vert A - zI\vert^\frac{1}{2}x\rangle.
\end{displaymath}
For all sufficiently large $n\in\mathbb{N}$ we have $\delta_n< 1$, and by \cite[Lemma VI.3.1]{katopert} there exists a family of operators $G_n(z)\in\mathcal{B}(\mathcal{H}_1)$, parameterised by $z\in\Gamma_n$, with the property $\Vert G_n(z)\Vert\le \delta_n<1$, and such that
\begin{displaymath}
-\langle(C - \lambda_n I)^{-1}B^*x,B^*y\rangle = \langle G_n(z)\vert A - \lambda I\vert^{\frac{1}{2}}x,\vert A - \lambda I\vert^{\frac{1}{2}}y\rangle.
\end{displaymath}
Similarly to the proof of Theorem \ref{thm0}, we obtain the following
\begin{displaymath}
S(\lambda_n) -(z-\lambda_n)I = \vert A - zI\vert^\frac{1}{2}\big((A - zI)\vert A - zI\vert^{-1} + G_n(z)\big)\vert A - zI\vert^\frac{1}{2},
\end{displaymath}
and we deduce that $(z - \lambda_n)\in\rho(S(\lambda_n))$ for all $z\in\Gamma_n$. For any $x,y\in\mathcal{H}_1$ the inner product $\langle E(\{\mu_{\kappa+n}\})x - F_n(\Delta_n)x,y\rangle$ is equal to
\begin{displaymath}
-\frac{1}{2i\pi}\int_{\Gamma_n}\langle (A - \zeta I)^{-1}x - (S(\lambda_n) - (\zeta-\lambda_n)I)^{-1}x,y\rangle~d\zeta.
\end{displaymath}
The inner product inside this integral equals
\begin{displaymath}
\Big\langle \Big[I - \big(I + \vert A - \zeta I\vert(A - \zeta I)^{-1}G_n(\zeta)\big)^{-1}\Big]\vert A-\zeta I\vert(A - \zeta I)^{-1}\vert A - \zeta I\vert^{-\frac{1}{2}}x,\vert A - \zeta I\vert^{-\frac{1}{2}}y\Big\rangle,
\end{displaymath}
and using the Neumann series we see that this inner product is equal to
\begin{displaymath}
\sum_{m=1}^\infty\Big\langle \big(\vert A - \zeta I\vert(\zeta I - A)^{-1}G_n(\zeta)\big)^{m}\vert A-\zeta I\vert(A - \zeta I)^{-1}\vert A - \zeta I\vert^{-\frac{1}{2}}x,\vert A - \zeta I\vert^{-\frac{1}{2}}y\Big\rangle,
\end{displaymath}
and therefore
\begin{align*}
\vert\langle E(\{\mu_{\kappa+n}\})x - &F_n(\Delta_n)x,y\rangle\vert\\ &\le\frac{\Vert x\Vert\Vert y\Vert}{2\pi\dist[\Gamma_n,\sigma(A)]}\sum_{m=1}^\infty\int_{\Gamma_n}\big\Vert\vert A - \zeta I\vert(A - \zeta I)^{-1}G_n(\zeta)\big\Vert^m~d\zeta\\
&\le \frac{\vert\Gamma_n\vert\Vert x\Vert\Vert y\Vert}{2\pi\dist[\Gamma_n,\sigma(A)]}\sum_{m=1}^\infty\max_{z\in\Gamma_n}\Vert G_n(z)\Vert^m\\
&\le \frac{\gamma_n\Vert x\Vert\Vert y\Vert}{\dist[\Gamma_n,\sigma(A)]}\sum_{m=1}^\infty\delta_n^m.
\end{align*}
It follows from \eqref{ass} that $\gamma_n/\dist[\Gamma_n,\sigma(A)]\to 1$, and therefore with some $M\in\mathbb{R}$ independent of $n\in\mathbb{N}$, we obtain
\begin{displaymath}
\vert\langle E(\{\mu_{\kappa+n}\})x - F_n(\Delta_n)x,y\rangle\vert \le M\frac{\delta_n}{1 - \delta_n}\Vert x\Vert\Vert y\Vert,
\end{displaymath}
and the result follows from \eqref{deltan}.
\end{proof}

\begin{defn}
Let $\{x_n\}_{n=1}^\infty$ and $\{y_n\}_{n=1}^\infty$ be sequences in a Hilbert space with $\{y_n\}_{n=1}^\infty$ orthonormal. Then $\{x_n\}_{n=1}^\infty$ is a Bari sequence with respect to $\{y_n\}_{n=1}^\infty$ if
\begin{displaymath}
\sum_{n=1}^\infty\Vert y_n - x_n\Vert^2 < \infty.
\end{displaymath}
If $\{x_n\}_{n=1}^\infty$ is a Bari sequence with respect to $\{y_n\}_{n=1}^\infty$ where $\{x_n\}_{n=1}^\infty$ and $\{y_n\}_{n=1}^\infty$ form a basis, then $\{x_n\}_{n=1}^\infty$ is a Bari basis with respect to $\{y_n\}_{n=1}^\infty$.
\end{defn}

\begin{cor}\label{thm2}
Let $\mathcal{M}$ be a top-dominant or diagonally-dominant operator matrix. Let $A$ have compact resolvent with eigenvalues $\{\mu_n\}_{n=1}^\infty$, $\dist[\mu_n,\sigma(A)\backslash\{\mu_n\}]\to\infty$, and let $C\in\mathcal{B}(\mathcal{H}_2)$.
Suppose for some $N\in\mathbb{N}$ and with $\kappa$ as in Corollary \eqref{compactresolvent}, the eigenvalues $\{\mu_{\kappa+n}\}_{n=N}^\infty$ are simple, and
\begin{equation}\label{sum}
\sum_{n=1}^\infty\frac{1}{(\mu_{n+1} - \mu_{n})^2} < \infty.
\end{equation}
If $\{y_n\}_{n=1}^\infty$ are orthonormal eigenvectors of $A$, and $\{(x_n,K_cx_n)^T\}_{n=1}^\infty$ are orthonormal eigenvectors corresponding to the eigenvalues of $\mathcal{M}$ which are greater than $c$, then
$\{x_n\}_{n=1}^\infty$ is a Bari sequence with respect to $\{y_{\kappa+n}\}_{n=1}^\infty$.
\end{cor}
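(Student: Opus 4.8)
The plan is to upgrade the qualitative convergence $\Vert E(\{\mu_{\kappa+n}\})-F_n(\Delta_n)\Vert\to0$ of Proposition \ref{thm11} to a rate whose square is summable, and then to convert this into an estimate for the eigenvectors. Throughout we work with $n$ large, the finitely many small indices being harmless for a Bari sequence. By Corollary \ref{compactresolvent} the operator $K_c$ exists and $\{\lambda_n\}$ accumulates only at $+\infty$, with $\mu_{\kappa+n}\le\lambda_n\le\mu_{\kappa+n}+2a$ for large $n$ by \eqref{ass}. Since each $\lambda_m$ lies in $(c,\infty)$ and $\lambda_n\to\infty$, for large $n$ the point of $\sigma(\mathcal{M})\setminus\{\lambda_n\}$ nearest $\lambda_n$ is $\lambda_{n-1}$ or $\lambda_{n+1}$, so $\gamma_n=\tfrac12\min(\lambda_n-\lambda_{n-1},\lambda_{n+1}-\lambda_n)$; combining this with \eqref{ass} and \eqref{ass0} gives $\gamma_n\ge c_0\min(\mu_{\kappa+n}-\mu_{\kappa+n-1},\,\mu_{\kappa+n+1}-\mu_{\kappa+n})$ for some $c_0>0$ and all large $n$, hence $\gamma_n^{-2}\le c_0^{-2}\big((\mu_{\kappa+n}-\mu_{\kappa+n-1})^{-2}+(\mu_{\kappa+n+1}-\mu_{\kappa+n})^{-2}\big)$ and $\sum_n\gamma_n^{-2}<\infty$ by \eqref{sum}. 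Moreover $\gamma_n\le\tfrac12(\lambda_n-\lambda_{n-1})\le\tfrac12(\lambda_n-c)$, so $(\lambda_n-c)^{-1}\le\tfrac12\gamma_n^{-1}$ and, since $\mu_{\kappa+n}\ge\lambda_n-2a$, also $\mu_{\kappa+n}^{-1}=O(\gamma_n^{-1})$.

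Next I would revisit the estimate inside the proof of Proposition \ref{thm11}, where on $\Gamma_n$ one has $\dist[\Gamma_n,\sigma(A)]\sim\gamma_n$, $\mu_{\kappa+n}$ is the unique point of $\sigma(A)$ enclosed, and
\begin{displaymath}
\Vert E(\{\mu_{\kappa+n}\})-F_n(\Delta_n)\Vert\le M\frac{\delta_n}{1-\delta_n},\qquad \delta_n=\max_{z\in\Gamma_n}\Big[\frac{a}{\lambda_n-c}+\frac{|az+b|}{\dist[z,\sigma(A)](\lambda_n-c)}\Big].
\end{displaymath}
Using $|z|\le\lambda_n+\gamma_n$ and $\dist[z,\sigma(A)]\ge\tfrac12\gamma_n$ for $z\in\Gamma_n$, together with the inequalities of the first paragraph, gives $\delta_n=O(\gamma_n^{-1})$, so $d_n:=\Vert E(\{\mu_{\kappa+n}\})-F_n(\Delta_n)\Vert=O(\gamma_n^{-1})$ and $\sum_n d_n^2<\infty$. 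For $n\ge N$ the eigenvalue $\mu_{\kappa+n}$ is simple, so $E(\{\mu_{\kappa+n}\})$ is the rank-one orthogonal projection onto $\Span\{y_{\kappa+n}\}$; since $d_n<1$ for large $n$, $F_n(\Delta_n)$ then also has rank one, and as $0\in\sigma(S(\lambda_n))$ while the first component $x_n$ of the eigenvector $(x_n,K_cx_n)^T$ satisfies $S(\lambda_n)x_n=0$ with $x_n\ne0$, this forces $F_n(\Delta_n)$ to be the orthogonal projection onto $\Span\{x_n\}$. For rank-one orthogonal projections $P=u\otimes u$, $Q=v\otimes v$ onto unit vectors one has $\Vert P-Q\Vert^2=1-|\langle u,v\rangle|^2$; writing $\hat{x}_n=x_n/\Vert x_n\Vert$ and adjusting the phases of the eigenvectors so that $\langle y_{\kappa+n},\hat{x}_n\rangle\ge0$, the inequality $1-\sqrt{1-t}\le t$ yields $\Vert y_{\kappa+n}-\hat{x}_n\Vert^2\le 2d_n^2$.

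It then remains to control the norm defect of $x_n$. The eigenvalue equation forces $K_cx_n=-(C-\lambda_n I)^{-1}B^*x_n$, so $\Vert K_cx_n\Vert\le\Vert B^*x_n\Vert/(\lambda_n-c)$; applying the lower-bound computation that precedes \eqref{quadform} to the normalised eigenvector $(x_n,K_cx_n)^T$ gives $\mathfrak{t}[x_n]=O(\lambda_n)$, hence by \eqref{domcons} $\Vert B^*x_n\Vert^2=O(\lambda_n)$ and $\Vert K_cx_n\Vert^2=O(\lambda_n^{-1})=O(\gamma_n^{-1})$. In particular $\Vert x_n\Vert^2=1-\Vert K_cx_n\Vert^2\to1$, so $\hat{x}_n$ is defined for large $n$, and $(1-\Vert x_n\Vert)^2\le\Vert K_cx_n\Vert^4=O(\gamma_n^{-2})$. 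Combining, $\Vert y_{\kappa+n}-x_n\Vert^2\le 2\Vert y_{\kappa+n}-\hat{x}_n\Vert^2+2(1-\Vert x_n\Vert)^2=O(\gamma_n^{-2})$, and summing with the first paragraph gives $\sum_n\Vert y_{\kappa+n}-x_n\Vert^2<\infty$. I expect the first paragraph to be the main obstacle: extracting from Proposition \ref{thm11} a rate with square-summable majorant, which works precisely because $\gamma_n$, $(\lambda_n-c)^{-1}$ and $\mu_{\kappa+n}^{-1}$ are all $O(\gamma_n^{-1})$ while $\sum_n\gamma_n^{-2}<\infty$ is exactly the content of \eqref{sum}.
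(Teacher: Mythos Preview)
Your argument is correct and follows the same overall line as the paper's: both quantify the projection estimate of Proposition~\ref{thm11} to $\Vert E(\{\mu_{\kappa+n}\})-F_n(\Delta_n)\Vert\le M\delta_n/(1-\delta_n)$ with $\delta_n=O(\gamma_n^{-1})$, derive $\sum_n\gamma_n^{-2}<\infty$ from \eqref{ass} and \eqref{sum}, and then pass from projections to eigenvectors. The only genuine difference is in this last step. The paper uses that $x_n\in F_n(\Delta_n)\mathcal{H}_1$ (since $S(\lambda_n)x_n=0$), so $(I-E(\{\mu_{\kappa+n}\}))x_n=(F_n(\Delta_n)-E(\{\mu_{\kappa+n}\}))x_n$, and after fixing the phase of $y_{\kappa+n}$ arrives in one stroke at
\[
\Vert y_{\kappa+n}-x_n\Vert\le 2\Vert(F_n(\Delta_n)-E(\{\mu_{\kappa+n}\}))x_n\Vert\le \frac{2M\delta_n}{1-\delta_n},
\]
with no separate treatment of $\Vert K_cx_n\Vert$. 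Your split into $\hat x_n=x_n/\Vert x_n\Vert$ plus the norm defect $1-\Vert x_n\Vert$, the latter controlled through the quadratic-form bound $\mathfrak{t}[x_n]=O(\lambda_n)$ and hence $\Vert K_cx_n\Vert^2=O(\lambda_n^{-1})$, is a legitimate alternative. It is slightly longer but makes the asymptotic $\Vert x_n\Vert\to1$ explicit and sidesteps the paper's one-line inequality, which tacitly uses that $\Vert x_n\Vert$ is already close to~$1$.
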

\begin{proof}
From Proposition \ref{thm11}, we have $\Vert E(\{\mu_{\kappa+n}\}) - F_n(\Delta_n)\Vert<1$ for all sufficiently large
$n\in\mathbb{N}$. We have $x_n\in\Dom(S(\lambda_n))$ and $S(\lambda_n)x_n = 0$; see \cite[proof of Proposition 2.2]{math}
and \cite[proof of Proposition 4.4]{math0}. Therefore $x_n\in F_n(\Delta_n)\mathcal{H}_1$. Thus we have
$E(\{\mu_{\kappa+n}\}) x_n\ne 0$ for all sufficiently large $n\in\mathbb{N}$. For $n\ge N$ we have
$E(\{\mu_{\kappa+n}\})x_n = \langle x_n,y_{\kappa+n}\rangle y_{\kappa+n}$, and thus $E(\{\mu_{\kappa+n}\}) x_n = y_{\kappa+n}/\Vert E(\{\mu_{\kappa+n}\}) x_n\Vert$. With the notation of Proposition \ref{thm11}, we obtain
\begin{align*}
\Vert y_{\kappa+n} - x_n\Vert &= \Vert E(\{\mu_{\kappa+n}\}) (y_{\kappa+n} - x_n)\Vert + \Vert(I - E(\{\mu_{\kappa+n}\}) )(y_{\kappa+n} - x_n)\Vert\\
&= \Vert E(\{\mu_{\kappa+n}\}) x_n/\Vert E(\{\mu_{\kappa+n}\}) x_n\Vert - E(\{\mu_{\kappa+n}\}) x_n\Vert\\
&+ \Vert(I - E(\{\mu_{\kappa+n}\}))x_n\Vert\\
&\le 2\Vert(I - E(\{\mu_{\kappa+n}\}))x_n\Vert\\
&= 2\Vert(F_n(\Delta_n)  - E(\{\mu_{\kappa+n}\}) )x_n\Vert\\
&\le\frac{2M\delta_n}{1 - \delta_n}.
\end{align*}
For some $\tilde{N}\ge N$ we have
\begin{displaymath}
\sum_{n=1}^\infty\Vert y_{\kappa+n} - x_n\Vert^2
\le\sum_{n=1}^{\tilde{N}-1}\Vert y_{\kappa+n} - x_n\Vert^2 + 4M^2\sum_{n=\tilde{N}}^\infty\bigg(\frac{\delta_n}{1 - \delta_n}\bigg)^2.
\end{displaymath}
That the second term on the right-hand side is finite follows from \eqref{ass} and \eqref{sum}.
\end{proof}

\begin{rem}
If $\kappa = 0$ in Corollary \ref{thm2} then the $\{x_n\}_{n=1}^\infty$ is a Bari basis with respect to $\{y_{n}\}_{n=1}^\infty$.
\end{rem}

\section{An example from magnetohydrodynamics}

The following operator appears in magnetohydrodynamics in the space $L^2_{\rho}(0,1)\times L^2_{\rho}(0,1)\times L^2_{\rho}(0,1)$, where $L^2_{\rho}(0,1)$ denotes the $L^2$-space with weight $\rho$, and $D$ is the differential operator $-id/dx$. We consider the block operator matrix $\mathcal{M}_0$ given by
\begin{displaymath}
\left(\begin{array}{lll}
        \rho^{-1}D\rho(\upsilon_a^2 + \upsilon_s^2)D + k^2\upsilon_a^2 & (\rho^{-1}D\rho(\upsilon_a^2 + \upsilon_s^2) +
	ig)k_\perp & (\rho^{-1}D\rho\upsilon_s^2 + ig)k_\parallel\\
	k_\perp((\upsilon_a^2 + \upsilon_s^2)D - ig) & k^2\upsilon_a^2 + k_\perp^2\upsilon_s^2 & k_\perp k_\parallel\upsilon_s^2\\
	k_\parallel(\upsilon_s^2D - ig) & k_\perp k_\parallel\upsilon_s^2 & k_\parallel^2\upsilon_s^2
      \end{array}\right).
\end{displaymath}
The operator $\mathcal{M}_0$ on the domain $(W_{\rho}^{2,2}(0,1)\cap W_{0,\rho}^{1,2}(0,1))\times W_{\rho}^{1,2}(0,1)\times W_{\rho}^{1,2}(0,1)$ is essentially self-adjoint; see \cite[Example 4.5]{math}. The eigenvalue problem $\mathcal{M}u = \lambda u$ describes the oscillations of a hot compressible gravitating plasma layer in an ambient magnetic field; see \cite{at} and references therein. The first component of the vector $u$ satisfies Dirichlet boundary conditions, $\rho(x)$ the equilibrium density of the plasma, $\upsilon_a(x)$ the Alfv\'{e}n speed, $\upsilon_s(x)$ the sound speed, $k_\perp(x)$ and $k_\parallel(x)$ are the coordinates of the wave vector with respect to the field allied orthonormal bases, $k(x)^2 = k_\perp(x)^2 + k_\parallel(x)^2$, and $g$ is the gravitational constant.
The essential spectrum of this operator is precisely the range of the functions $\upsilon_a^2k_\parallel$ and
$\upsilon_a^2\upsilon_s^2k_\perp/(\upsilon_a^2+\upsilon_s^2)$; see \cite[Section 5]{at}.

The operator $\mathcal{M}_0$ forms a top-dominant operator matrix, with
\begin{displaymath}
A := \rho^{-1}D\rho(\upsilon_a^2 + \upsilon_s^2)D + k^2\upsilon_a^2:(W_{\rho}^{2,2}(0,1)\cap W_{0,\rho}^{1,2}(0,1))\to L^2_{\rho}(0,1),
\end{displaymath}
and
\begin{displaymath}
C:=\left(\begin{array}{ll}
	k^2\upsilon_a^2 + k_\perp^2\upsilon_s^2 & k_\perp k_\parallel\upsilon_s^2\\
	k_\perp k_\parallel\upsilon_s^2 & k_\parallel^2\upsilon_s^2
      \end{array}\right):L^2_{\rho}(0,1)\times L^2_{\rho}(0,1)\to L^2_{\rho}(0,1)\times L^2_{\rho}(0,1).
\end{displaymath}
We see that $A$ is a Sturm-Liouville operator with Dirichlet boundary conditions. The operator $C$ is self-adjoint and bounded, with
\begin{displaymath}
c = \max\Bigg\{\frac{k^2(\upsilon_a^2+\upsilon_s^2)}{2}+\sqrt{\frac{k^4(\upsilon_a^2+\upsilon_s^2)^2}{4}-
k^2k_\parallel^2\upsilon_a^2\upsilon_s^2}\Bigg\},
\end{displaymath}
and with
\begin{align*}
a&=\max\Bigg\{\frac{(\upsilon_a^2+\upsilon_s^2)^2k_\perp^2+\upsilon_s^4k_\parallel^2}{\upsilon_a^2+\upsilon_s^2}\Bigg\},\\
b&= \max\Bigg\{\max \Big\{k^2g^2-\frac{g}{\rho}(\rho((\upsilon_a^2+\upsilon_s^2)k_\perp+\upsilon_s^2k_\parallel))'\Big\} - a\min \Big\{k^2\upsilon_a^2\Big\},0\Bigg\},
\end{align*}
$A$ and $B^*$ satisfy \eqref{domcons}; see \cite[Example 4.15]{math}. If $\rho$ and $(\upsilon_a^2 + \upsilon_s^2)$ are positive and belong to $C^2([0,1])$, then using the Liouville transform (see for example \cite[Section 2.5.1]{pry}), we see that the eigenvalues of $A$ are precisely those of a Sturm-Liouville operator in normal form with Dirichlet boundary conditions and bounded potential. Thus the eigenvalues of $A$ satisfy
\begin{displaymath}
\dist[\mu_n,\sigma(A)\backslash\{\mu_n\}]\to\infty\quad\textrm{and}\quad\sum_{n=1}^\infty\frac{1}{(\mu_{n+1} - \mu_{n})^2} < \infty.
\end{displaymath}

\begin{thm}
Let $\rho$, $(\upsilon_a^2 + \upsilon_s^2)\in C^2([0,1])$ and $\rho>0$, $\upsilon_a^2 + \upsilon_s^2>0$ on $[0,1]$, and let $\mathcal{M}$ be the closure of $\mathcal{M}_0$. There exists closed bounded operator $K_c$ which satisfies \eqref{angularoperator}, and $\codim(\Dom(K_c)) = \dim(\mathcal{L}_{(-\infty,0)}(S(\tilde{c})))<\infty$. If $\{(x_n,K_cx_n)^T\}_{n=1}^\infty$ are orthonormal eigenvectors corresponding to the eigenvalues $\{\lambda_n\}_{n=1}^\infty$ of $\mathcal{M}$ which are greater than $c$, then the set $\{x_n\}_{n=1}^\infty$ forms a Riesz basis for $\Dom(K_c)$. Moreover, if $\{y_n\}_{n=1}^\infty$ are orthonormal eigenvectors of $A$, then $\{x_n\}_{n=1}^\infty$ is a Bari sequence with respect to $\{y_{\kappa+n}\}_{n=1}^\infty$.
\end{thm}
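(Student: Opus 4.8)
The plan is to recognise this theorem as a direct application of the general machinery of Sections 3 and 4 to the magnetohydrodynamics matrix, so the work reduces to checking hypotheses. First I would assemble the structural facts already established for this example: $\mathcal{M}_0$ is top-dominant (hence $\mathcal{M}$ is self-adjoint as its closure), the diagonal entry $A$ is a regular Sturm--Liouville operator on $(0,1)$ with Dirichlet boundary conditions and therefore has compact resolvent, the operator $C$ is a bounded self-adjoint $2\times 2$ multiplication matrix so $C\in\mathcal{B}(\mathcal{H}_2)$, and the pair $A, B^*$ satisfies \eqref{domcons} with the explicit constants $a,b$ exhibited above.

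With these in hand, I would invoke Corollary \ref{compactresolvent}: its hypotheses (top-dominant, $A$ with compact resolvent, $C\in\mathcal{B}(\mathcal{H}_2)$) are exactly what we have just verified, and it delivers $\tilde{c}$ with $(c,\tilde{c}]\subset\rho(\mathcal{M})$, a closed bounded angular operator $K_c$ satisfying \eqref{angularoperator}, and the codimension formula $\codim(\Dom(K_c)) = \dim(\mathcal{L}_{(-\infty,0)}(S(\tilde{c})))<\infty$. The same hypotheses are those of Corollary \ref{c4}, which then gives that the first components $\{x_n\}_{n=1}^\infty$ of the orthonormal eigenvectors $\{(x_n,K_cx_n)^T\}_{n=1}^\infty$ associated with the eigenvalues of $\mathcal{M}$ in $(c,\infty)$ form a Riesz basis for $\Dom(K_c)$.

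For the Bari conclusion I would additionally feed in the two spectral asymptotics for $A$. Because $\rho$ and $\upsilon_a^2+\upsilon_s^2$ are positive and of class $C^2([0,1])$, the Liouville transform puts $A$ in normal form with bounded potential and Dirichlet conditions, so (as recorded just before the theorem) $\dist[\mu_n,\sigma(A)\backslash\{\mu_n\}]\to\infty$ and $\sum_n(\mu_{n+1}-\mu_n)^{-2}<\infty$; and since every eigenvalue of a regular Sturm--Liouville problem on a bounded interval with separated boundary conditions is simple, the eigenvalues $\{\mu_{\kappa+n}\}_{n=N}^\infty$ are simple for every $N$. All hypotheses of Corollary \ref{thm2} are then met, and it yields that $\{x_n\}_{n=1}^\infty$ is a Bari sequence with respect to $\{y_{\kappa+n}\}_{n=1}^\infty$ for any choice of orthonormal eigenvectors $\{y_n\}$ of $A$.

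I do not anticipate a real obstacle here, since every ingredient is either supplied by the abstract theorems of the earlier sections or was computed explicitly for this example in \cite{math}; the only points needing care are the bookkeeping of the index shift by $\kappa = \dim(\mathcal{L}_{(-\infty,0)}(S(\tilde{c})))$ between the eigenvalues of $\mathcal{M}$ and those of $A$, and confirming the simplicity of the Sturm--Liouville spectrum, both of which are standard.
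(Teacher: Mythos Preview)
Your proposal is correct and matches the paper's approach: the paper states this theorem without proof, treating it as an immediate consequence of Corollary~\ref{compactresolvent}, Corollary~\ref{c4}, and Corollary~\ref{thm2}, with all hypotheses for the MHD example (top-dominance, compact resolvent of the Sturm--Liouville operator $A$, boundedness of $C$, the relative bound \eqref{domcons}, and the eigenvalue asymptotics obtained via the Liouville transform) already verified in the discussion preceding the theorem. Your identification of which abstract result feeds each conclusion, and your remark that simplicity of the $\mu_n$ is standard Sturm--Liouville theory, are exactly what is needed.
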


\end{document}